\newtheorem{theorem}{Theorem}[section]
\newtheorem{prop}{Proposition}[section]
\newtheorem{example}{Example}[section]
\newtheorem{remark}{\sc Remark}
\newtheorem{lemma}{\sc Lemma}[section]
\newtheorem{corollary}{\sc Corollary}[section]
\newtheorem{definition}{\sc Definition}[section]
\newcommand{\be}{\begin{eqnarray}}
\newcommand{\ee}{\end{eqnarray}}
\newcommand{\Be}{\begin{eqnarray*}}
	\newcommand{\Ee}{\end{eqnarray*}}
\newcommand{\bee}{\begin{equation}}
\newcommand{\eee}{\end{equation}}
\newcommand{\ba}{\begin{array}}
	\newcommand{\ea}{\end{array}}
\newcommand{\bl}{\begin{lemma}}
	\newcommand{\el}{\end{lemma}}
\newcommand{\bd}{\begin{definition}}
	\newcommand{\ed}{\end{definition}}
\newcommand{\bt}{\begin{theorem}}
	\newcommand{\et}{\end{theorem}}
\newcommand{\bp}{\begin{proof}}
	\newcommand{\ep}{\end{proof}}
\newcommand{\bi}{\begin{itemize}}
	\newcommand{\ei}{\end{itemize}}
\newcommand{\br}{\begin{remark}}
	\newcommand{\er}{\end{remark}}
\newcommand{\bc}{\begin{corollary}}
	\newcommand{\ec}{\end{corollary}}
\newcommand{\bex}{\begin{example}}
	\newcommand{\eex}{\end{example}}
\begin{document}
	\date{}
	\title{\textbf{Weighted quasi-metrics associated with Finsler metrics}}
	\maketitle
	\begin{center}
		\author{\textbf{Gauree Shanker, Sarita Rani}}
	\end{center}
	\begin{center}
		Department of Mathematics and Statistics\\
		School of Basic and Applied Sciences\\
		Central University of Punjab, Bathinda, Punjab-151 001, India\\
		Email: gshankar@cup.ac.in, saritas.ss92@gmail.com
\end{center}
\begin{center}
	\textbf{Abstract}
\end{center}
\begin{small}
The current paper deals with some new classes of Finsler metrics with reversible geodesics. We construct weighted quasi-metrics associated with these metrics. Further, we investigate some important geometric properties of weighted quasi-metric space. Finally, we discuss the embedding of  quasi-metric spaces with generalized weight.
\end{small}\\
	\textbf{2010 Mathematics Subject Classification:}  53C60, 53C22.\\
	\textbf{Keywords and Phrases:} Reversible geodesics, weighted quasi-metrics, absolute homogeneous metrics, metric structure, perimeter, embedding. 
\section{Introduction}
   The study of Finsler spaces with reversible geodesics is an interesting and important  topic in Riemann-Finsler geometry. A Finsler space is said to be with   reversible geodesics if for any of its oriented geodesic paths, the same path traversed  in the opposite sense is also a geodesic. Due to computational advantages, we consider this problem only for Finsler spaces with $(\alpha, \beta)-$metrics. So far, some progress has been done in this direction. Below we mention few of them.\\
   
   The conditions for a Randers space to have reversible geodesics have been obtained in \cite{Cra2005}. In \cite{Mas.Sab.Shi2010}, Masca et al. have obtained conditions for a Finsler space with $(\alpha, \beta)-$metric to have reversible geodesics and strictly  reversible geodesics. Further, in \cite{Mas.Sab.Shi2012}, they find necessary and sufficient conditions for two dimensional Finsler spaces with $(\alpha, \beta)-$metric to have reversible geodesics.  In \cite{Sab.Shi2012}, Sabau and Shimada study geometrical properties of Finsler spaces with reversible geodesics focusing especially on $(\alpha, \beta)-$metrics. In \cite{Sab.Shi.shi2014}, Sabau et al.  study metric structures associated to Finsler metrics and have obtained some important results.\\
   In this paper, first we construct some new classes of Finsler metrics with reversible geodesics, and then we find weighted quasi-metrics associated with these  Finsler metrics. 
 Next, we investigate some geometric properties of these spaces concerned with geodesic triangle. Finally, we show that every weighted quasi metric space with generalized weight can be viewed as the graph of a $1-$Lipschitz function over a suitable metric space. 
\section{Preliminaries}
Let $M$ be a connected, smooth  $n-$ manifold, and  $$ TM:= \bigsqcup_{x \in M}T_{x}M $$ be its  tangent bundle, where 
 $T_{x}M$ is  the tangent space at $x \in M.$
 A generic point $u$ of $TM$ is denoted by $u=(x,y)\in TM,$ where  $y\in T_{x}M.$ Locally, if $x=(x^i)$ is a local coordinate system on $M,$ then $u=(x,y)=(x^i,y^i)\in TM,$ where $y=y^i\left(\dfrac{\partial}{\partial x^i} \right)_x,$ \  $i=1,...,n.$\\
\begin{definition}
A Finsler metric on a smooth manifold $M$  is a function $F:TM\longrightarrow \left[ 0,\infty\right) $ with the following properties:
\begin{itemize}
	\item[(I)] \textbf{Regularity:} $F$ is $ C^{\infty}$ on the slit tangent bundle $ TM_0 := TM\backslash \left\lbrace 0\right\rbrace :=\left\{(x,y)\in TM\  | \ y \neq0 \right\}.$
	\item[(II)] \textbf{Positive homogeneity:} 
	$ F(x,\lambda y)=\lambda F(x,y) \; \forall \; \lambda >0. $
	\item[(III)] \textbf{Strong convexity:} The $n\times n$ Hessian matrix $\left( g_{ij}\right) =\left( \left[ \dfrac{1}{2}F^2 \right]_{y^iy^j} \right)  $ is positive-definite for all  $(x,y)\in TM_0 .$
\end{itemize}
\end{definition}
 If the condition $(II)$ is replaced with
\begin{itemize}
	\item[$(II)^{'}$] \textbf{Absolute homogeneity:} 
	$ F(x,\lambda y)=\lvert \lambda \rvert \  F(x,y) \ \  \forall \ \ \lambda \in \mathbb{R}, $
\end{itemize}
then the Finsler metric is called absolute homogeneous.\\
$(\alpha, \beta)-$ metrics are the most important Finsler metrics, which are written in the form $F=F(\alpha, \beta),$ where $F$ is positive one-homogeneous function of two arguments $\alpha$ and $\beta.$ Here $\alpha = \sqrt{a_{ij}(x) y^i y^j} $ is a Riemannian metric and  $\beta=b_i(x) y^i$ is a $1-$form on $M.$\\
In the present paper, we consider only $(\alpha, \beta)-$metrics constructed with positively defined Riemannian metric $(a_{ij})$ and $1-$form $\beta$ whose Riemannian length $b$ is less than $1,$ i.e., $b=\sqrt{b_{ij}(x)y^i y^j}<1.$ 
It is to be remaked that the Finsler metric is not absolute homogeneous, in general, due to the presence of one-form $\beta.$\\
Following Shen \cite{ChernShenRFG}, a Finsler $(\alpha, \beta)-$metric $F$ can be written in the following form
\begin{equation}
F=\alpha\ \phi(s),\ \ s=\dfrac{\beta}{\alpha},
\end{equation}
where $ \phi:I= [-r, r]\longrightarrow [0, \infty) $ is a $C^\infty$ function, and the interval $I$ can be chosen large enough such that $r\geq \dfrac{\beta}{\alpha}\ \forall\ x \in M $ and $y \in T_xM.$\\
Let us recall \cite{ChernShenRFG} the following lemma for later use:
\begin{lemma}{\label{gslem2.1}}(Shen's lemma)
The function  $F=\alpha \phi\left( s\right) \ (s=\beta/ \alpha) $ is a Finsler metric for any Riemannian metric $\alpha=\sqrt{a_{ij}y^i y^j}$ and $1-$form $\beta=b_i(x) y^i$ with $ \lVert\beta_x\rVert_\alpha <b_0 $ if and only if $\phi=\phi(s)$ is a positive $C^\infty$ function on $ (-b_0, b_0) $   satisfying the following conditions:
\begin{equation}{\label{2.2}}
\phi(s)-s\phi^{'}(s)+\left( b^2-s^2\right)\phi^{''}(s)>0, \ \ \lvert s\rvert\leq b<b_0,
\end{equation}
\begin{equation}{\label{2.3}}
\phi(s)-s\phi^{'}(s)>0, \ \ \lvert s\rvert<b_0.
\end{equation}
\end{lemma}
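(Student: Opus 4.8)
The plan is to translate the three defining properties of a Finsler metric into conditions on $\phi$, the only non-automatic one being the positive-definiteness (III) of the fundamental tensor $g_{ij}=\tfrac12[F^2]_{y^iy^j}$, which I would analyze by an explicit block-diagonalization over the positive-definite form $a_{ij}$. Properties (I) and (II) are disposed of quickly: under $y\mapsto\lambda y$ with $\lambda>0$ we have $\alpha\mapsto\lambda\alpha$, $\beta\mapsto\lambda\beta$, hence $s=\beta/\alpha$ is unchanged and $F=\alpha\phi(s)\mapsto\lambda F$, so (II) holds; and since $\alpha$ and $s$ are smooth on $TM_0$ while, by the Cauchy--Schwarz inequality, $|s|=|\beta(y)|/\alpha(y)\le\lVert\beta_x\rVert_\alpha=b<b_0$, the argument of $\phi$ stays in a compact subinterval of $(-b_0,b_0)$ on which $\phi$ is $C^\infty$, so $F$ is $C^\infty$ on $TM_0$ (with $\phi>0$ there, which we shall see is forced).

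The core computation is the formula for $g_{ij}$. Writing $\ell_i:=\alpha_{y^i}=a_{ij}y^j/\alpha$ (so $a^{ij}\ell_i\ell_j=1$) and $s_{y^i}=(b_i-s\ell_i)/\alpha$, differentiating $F^2=\alpha^2\phi(s)^2$ twice in $y$ produces an expression of the shape
\[
g_{ij}=\rho\,a_{ij}+\rho_0\,b_ib_j+\rho_1\bigl(b_i\ell_j+b_j\ell_i\bigr)+\rho_2\,\ell_i\ell_j,\qquad \rho=\phi\bigl(\phi-s\phi'\bigr),
\]
where $\rho_0,\rho_1,\rho_2$ are explicit polynomials in $s,\phi,\phi',\phi''$. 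I would then choose an $a$-orthonormal basis of $T_xM$ with first vector $\ell^{\sharp}=y/\alpha$; since $\langle b^{\sharp},\ell^{\sharp}\rangle_a=s$ and $\lVert b^{\sharp}\rVert_a^2=b^2$, the vector $b^{\sharp}$ lies in the plane of the first two basis vectors with second coordinate $\sqrt{b^2-s^2}$. In this basis $g_{ij}$ becomes block-diagonal: an $(n-2)\times(n-2)$ block equal to $\rho\,I=\phi(\phi-s\phi')\,I$, and a $2\times2$ block $G$ whose $(1,1)$-entry is $g(\ell^{\sharp},\ell^{\sharp})=F^2/\alpha^2=\phi^2$ and whose determinant works out to $\phi^3\bigl[(\phi-s\phi')+(b^2-s^2)\phi''\bigr]$, giving
\[
\det(g_{ij})=\phi^{\,n+1}\bigl(\phi-s\phi'\bigr)^{\,n-2}\bigl[(\phi-s\phi')+(b^2-s^2)\phi''\bigr]\det(a_{ij}).
\]

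The equivalence is then read off. If $\phi>0$, (\ref{2.3}) and (\ref{2.2}) hold, then the $(1,1)$-entry $\phi^2>0$, the $(n-2)$-block $\phi(\phi-s\phi')I$ is positive definite, and $\det G=\phi^3[(\phi-s\phi')+(b^2-s^2)\phi'']>0$; a symmetric $2\times2$ matrix with positive $(1,1)$-entry and positive determinant is positive definite, so $g_{ij}$ is positive definite at every point of $TM_0$, i.e.\ (III) holds, and $F$ is a Finsler metric. Conversely, if $F$ is a Finsler metric then $g_{ij}$ is positive definite: $F^2=g_{ij}y^iy^j>0$ forces $\phi>0$; positivity of the $(n-2)$-block forces $\phi-s\phi'>0$; positivity of $\det G$ forces (\ref{2.2}); and since the admissible pairs $(\alpha,\beta)$ in the lemma realize every $(s,b)$ with $|s|\le b<b_0$ (take constant-coefficient $\alpha,\beta$ on a coordinate chart), these inequalities hold on the full stated ranges.

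The main obstacle is the bookkeeping: deriving the exact coefficients $\rho_0,\rho_1,\rho_2$ and verifying that the $2\times2$ block $G$ has determinant precisely $\phi^3[(\phi-s\phi')+(b^2-s^2)\phi'']$, since this is where sign errors are easiest. A secondary caveat is the case $n=2$, where the $(n-2)$-block is empty, so condition (\ref{2.3}) does not come from the block decomposition and must be obtained separately (for instance by specializing $b=|s|$ in (\ref{2.2}), which forces $\phi-s\phi'>0$, together with smoothness of $\phi$); once the explicit tensor formula is in hand, the remainder is linear algebra over the Riemannian form $a_{ij}$.
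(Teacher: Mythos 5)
The paper does not prove this lemma at all: it is recalled verbatim from Chern--Shen (\cite{ChernShenRFG}), so there is no in-paper argument to compare against. Your proposal correctly reconstructs the standard proof from that source --- the decomposition $g_{ij}=\rho a_{ij}+\rho_0 b_ib_j+\rho_1(b_i\ell_j+b_j\ell_i)+\rho_2\ell_i\ell_j$ with $\rho=\phi(\phi-s\phi')$, the block-diagonalization in an $a$-orthonormal frame adapted to $y$ and $b^{\sharp}$, and the determinant identity $\det(g_{ij})=\phi^{n+1}(\phi-s\phi')^{n-2}\bigl[(\phi-s\phi')+(b^2-s^2)\phi''\bigr]\det(a_{ij})$ are exactly the ingredients of the textbook argument, and your handling of the converse (realizing all admissible pairs $(s,b)$ by constant-coefficient data) and of the degenerate $n=2$ case (where condition (\ref{2.3}) must be extracted from (\ref{2.2}) at $b=|s|$ rather than from the empty $(n-2)$-block) is sound. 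The only unfinished business is the bookkeeping you flag yourself, namely the explicit coefficients $\rho_0,\rho_1,\rho_2$ and the verification that the $2\times2$ block has determinant $\phi^3\bigl[(\phi-s\phi')+(b^2-s^2)\phi''\bigr]$; these are routine and agree with the published computation.
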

There are so many important  $(\alpha, \beta)-$ metrics constructed till now e.g. Randers metric  $F=\alpha+\beta,$ Matsumoto metric  $F=\dfrac{\alpha^{2}}{\alpha-\beta}$ etc. (see \cite{GHAM1903} for more examples).
\begin{definition}
	Let $\gamma :[a,b]\longrightarrow M$ be a piecewise smooth curve on a Finsler manifold $M,$ then its Finslerian length is defined as
	\begin{equation}
	L_F(\gamma) :=\int_{a}^{b}F\left( \gamma(t), \dot{\gamma}(t)\right)  dt.
	\end{equation}
\end{definition}
The Finsler metric $F$ induces a Finslerian distance function $d_F :M \times M \longrightarrow [0,\infty),$ defined by 
\begin{equation}
d_F(p,q)= \inf_\gamma L_F(\gamma),\ \ \ p,q \in M,
\end{equation} 
where the infimum is taken over all piecewise smooth curves $\gamma$ on $M$ joining the points $p,q\in M.$ It is to be remarked that $d_F $ is not symmetric, i.e., $d_F(p,q)\neq d_F(q,p),$ in general. But, if $F$ is absolute homogeneous, then the Finslerian distance $d_F$ becomes symmetric. \\
Recall \cite{BCS} that a smooth curve $\gamma :[0,1]\longrightarrow M$ is called a geodesic of $(M,F)$ if it minimizes the Finslerian length for all piecewise smooth curves that keep their end points fixed.
\begin{definition}
Let $F :TM \longrightarrow[0,\infty)$ be a Finsler metric, then its reverse Finsler metric denoted by $\bar{F},$ is a map  $\bar{F} :TM \longrightarrow[0,\infty),\ $ given by $ \bar{F}(x,y)=F(x, -y).$ 
\end{definition}
Next, we have the following lemma:
\begin{lemma}
	If $\gamma(t)$ is a geodesic of the Finsler metric $F,$ then $\bar{\gamma}(t) :=\gamma(1-t)$ is a geodesic of $\bar{F},$ but not necessarily a geodesic of $F$ in general.
\end{lemma}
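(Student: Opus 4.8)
The plan is to exploit the elementary fact that time reversal of a curve interchanges the length functionals of $F$ and $\bar F$. Write $\bar\gamma(t)=\gamma(1-t)$, so that $\dot{\bar\gamma}(t)=-\dot\gamma(1-t)$. For an arbitrary piecewise smooth curve $\sigma:[0,1]\to M$ I would first check, using the definition $\bar F(x,y)=F(x,-y)$ together with the substitution $u=1-t$, that
$$L_{\bar F}(\bar\sigma)=\int_0^1 \bar F\big(\sigma(1-t),-\dot\sigma(1-t)\big)\,dt=\int_0^1 F\big(\sigma(1-t),\dot\sigma(1-t)\big)\,dt=\int_0^1 F\big(\sigma(u),\dot\sigma(u)\big)\,du=L_F(\sigma).$$
Thus $\sigma\mapsto\bar\sigma$ is an involutive bijection from the set of piecewise smooth curves joining $p$ to $q$ onto the set of such curves joining $q$ to $p$, and it carries $L_F$ to $L_{\bar F}$.

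Second, I would deduce minimality. Suppose $\gamma$ is a geodesic of $F$ with $p=\gamma(0)$ and $q=\gamma(1)$. Given any competitor $\delta:[0,1]\to M$ joining $q$ to $p$ with these endpoints fixed, the reversed curve $\bar\delta$ joins $p$ to $q$, so $L_F(\bar\delta)\ge L_F(\gamma)$ by minimality of $\gamma$; applying the identity above (with $\sigma=\bar\delta$, noting $\bar{\bar\delta}=\delta$) gives $L_{\bar F}(\delta)=L_F(\bar\delta)\ge L_F(\gamma)=L_{\bar F}(\bar\gamma)$. Hence $\bar\gamma$ minimizes $L_{\bar F}$ among all piecewise smooth curves with the same endpoints, i.e. $\bar\gamma$ is a geodesic of $\bar F$. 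If one reads ``geodesic'' as a local rather than a global minimizer, the same argument applies verbatim, since reversal is a homeomorphism of the relevant path spaces and respects the constraint of keeping endpoints fixed.

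For the negative assertion I would not construct a new example but invoke the known failure of geodesic reversibility. Because of the $1$-form $\beta$, an $(\alpha,\beta)$-metric $F$ is in general not absolutely homogeneous, so $\bar F\ne F$, and therefore a geodesic of $\bar F$ need not satisfy the geodesic equations of $F$. Concretely, I would take a Randers metric $F=\alpha+\beta$ (or, more generally, an $(\alpha,\beta)$-metric) that fails the reversibility criterion of \cite{Cra2005} (respectively \cite{Mas.Sab.Shi2010}); then some geodesic $\gamma$ of $F$ has reverse $\bar\gamma$ which, by the first part, is a geodesic of $\bar F$ but is not a geodesic of $F$. This already establishes the ``not necessarily'' clause.

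I expect no genuine obstacle in the positive half: it is a change of variables plus a length-preserving bijection between the two relevant spaces of admissible curves; the only point needing care is matching the minimization statement to the paper's definition of geodesic (global versus local) and admitting piecewise smooth competitors. The ``main obstacle'', such as it is, is merely to have available — or to cite, as in \cite{Cra2005} — an explicit $(\alpha,\beta)$-metric whose geodesics are not reversible, since that is exactly what is needed to justify the second half of the statement.
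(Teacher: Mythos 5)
Your proof is correct. The paper itself states this lemma without any proof (it is presented as a recalled fact), so there is no argument in the paper to compare against; your length-reversal identity $L_{\bar F}(\bar\sigma)=L_F(\sigma)$, obtained from $\bar F(x,-y)=F(x,y)$ and the substitution $u=1-t$, combined with the observation that reversal is an involutive, length-exchanging bijection between the path spaces $\Gamma_{pq}$ and $\Gamma_{qp}$, is exactly the standard justification and matches the paper's global-minimizer definition of geodesic. Your handling of the negative clause by citing a non-reversible Randers metric from \cite{Cra2005} is also appropriate, since the existence of such an example is precisely what the ``not necessarily'' assertion requires.
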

 Let us recall (\cite{Cra2005}, \cite{Mas.Sab.Shi2010}) the following definitions and results  for later use:
 \begin{definition}
 	A Finsler metric $F$ is called with reversible geodesic if and only if for any geodesic $\gamma(t)$ of $F,$ the reverse curve $\tilde{\gamma}(t) :=\gamma(1-t)$ is also a geodesic of $F.$
 \end{definition}
It is to be noted that in general $d_F(p,q)=d_{\bar{F}}(q,p)$ for all $p, q$ in $M,$ but even though a Finsler metric is with reversible geodesic it does not mean that it has symmetric distance function, except for the absolute homogeneous case. In this connection, we have
\begin{theorem}
	Let $d_F :M \times M \longrightarrow [0,\infty)$ be a distance function defined on a connected and complete Finsler manifold $(M,F).$ Then the necessary and sufficient condition for $d_F$ to be symmetric is that $F$ is absolute homogeneous, i.e., $F(x,y)=F(x,-y)=\bar{F}(x,y).$ 
\end{theorem}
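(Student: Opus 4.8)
The plan is to prove the two implications separately: that absolute homogeneity of $F$ forces $d_F$ to be symmetric (the easy direction), and conversely that symmetry of $d_F$ forces $F$ to be absolute homogeneous (the substantive direction). The guiding principle for the converse is that the number $F(x,y)$ can be recovered from the distance function $d_F$ by an infinitesimal limit taken along any smooth curve through $x$ with velocity $y$.

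\medskip
\noindent\textbf{Sufficiency.} Suppose $F(x,-y)=F(x,y)$ for all $(x,y)\in TM$, i.e. $\bar F=F$. Given $p,q\in M$ and a piecewise smooth curve $\gamma:[0,1]\to M$ from $p$ to $q$, its reversal $\tilde\gamma(t):=\gamma(1-t)$ runs from $q$ to $p$, has $\dot{\tilde\gamma}(t)=-\dot\gamma(1-t)$, and, using absolute homogeneity and the substitution $s=1-t$,
\begin{equation}
L_F(\tilde\gamma)=\int_0^1 F\bigl(\gamma(1-t),-\dot\gamma(1-t)\bigr)\,dt=\int_0^1 F\bigl(\gamma(1-t),\dot\gamma(1-t)\bigr)\,dt=\int_0^1 F\bigl(\gamma(s),\dot\gamma(s)\bigr)\,ds=L_F(\gamma).
\end{equation}
Thus $\gamma\mapsto\tilde\gamma$ is a length-preserving bijection between the admissible curves for $d_F(p,q)$ and those for $d_F(q,p)$, and passing to infima gives $d_F(p,q)=d_F(q,p)$.

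\medskip
\noindent\textbf{Necessity.} The key step I would isolate is the local first-order expansion
\begin{equation}
d_F\bigl(\gamma(0),\gamma(t)\bigr)=t\,F\bigl(\gamma(0),\dot\gamma(0)\bigr)+o(t)\qquad(t\to0^+)
\end{equation}
valid for every smooth curve $\gamma$. The bound ``$\le$'' is immediate, since $\gamma|_{[0,t]}$ is a competitor and $\int_0^t F(\gamma,\dot\gamma)=tF(\gamma(0),\dot\gamma(0))+o(t)$ by continuity of $F$ and positive homogeneity. For ``$\ge$'' I would work in a coordinate chart around $x=\gamma(0)$, fix $\varepsilon>0$, and shrink a closed Euclidean ball $\bar B_r(x)$ until $F(z,w)\ge(1-\varepsilon)F(x,w)$ for all $z\in\bar B_r(x)$ and all $w$ (possible by continuity of $F$ on the compact set $\bar B_r(x)\times\{|w|=1\}$ together with positive homogeneity), while also $F(z,w)\ge c_0|w|$ there for some $c_0>0$; then for $t$ small every almost-minimizing curve $c$ from $x$ to $\gamma(t)$ must stay in $B_r(x)$ (a curve touching $\partial B_r(x)$ has $F$-length $\ge c_0 r$, whereas $d_F(x,\gamma(t))\to0$), whence $L_F(c)\ge(1-\varepsilon)\int_0^1 F(x,\dot c(\tau))\,d\tau\ge(1-\varepsilon)\,F(x,\gamma(t)-x)$ by the standard chord estimate for the Minkowski functional $F(x,\cdot)$, which is positively homogeneous and subadditive by convexity of $F$ in $y$. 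Since $\gamma(t)-x=t\dot\gamma(0)+o(t)$, subadditivity yields $F(x,\gamma(t)-x)=tF(x,\dot\gamma(0))+o(t)$, and letting first $t\to0^+$ and then $\varepsilon\to0$ gives the expansion. Now $\bar F$ is again a Finsler metric, so the expansion applies to it too and gives $d_{\bar F}(\gamma(0),\gamma(t))=t\bar F(\gamma(0),\dot\gamma(0))+o(t)=tF(\gamma(0),-\dot\gamma(0))+o(t)$; using the identity $d_F(q,p)=d_{\bar F}(p,q)$ recalled in the text, I obtain, along a curve with $\gamma(0)=x$ and $\dot\gamma(0)=y$,
\begin{equation}
\lim_{t\to0^+}\frac{d_F(\gamma(0),\gamma(t))}{t}=F(x,y),\qquad\lim_{t\to0^+}\frac{d_F(\gamma(t),\gamma(0))}{t}=F(x,-y).
\end{equation}
If $d_F$ is symmetric the two numerators agree for every $t$, so $F(x,y)=F(x,-y)$ for all $(x,y)\in TM_0$ (and trivially at $y=0$, where both vanish); combined with $F(x,\lambda y)=\lambda F(x,y)$ for $\lambda>0$ this gives, writing $\lambda=-\mu$ with $\mu>0$ in the remaining case, $F(x,\lambda y)=F(x,-\mu y)=\mu F(x,-y)=\mu F(x,y)=|\lambda|F(x,y)$, i.e. $F(x,\lambda y)=|\lambda|F(x,y)$ for all $\lambda\in\R$. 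Hence $F$ is absolute homogeneous, completing both directions.

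\medskip
\noindent\textbf{Main obstacle.} The one non-routine point is the lower bound in the local expansion: confining almost-minimizing curves to a small coordinate ball so that the frozen-coefficient comparison $F(z,\cdot)\ge(1-\varepsilon)F(x,\cdot)$ becomes available, and then keeping the $o(t)$ bookkeeping honest when passing from $\gamma(t)-x$ to $t\dot\gamma(0)$. It is worth stressing that this argument uses only positive homogeneity and subadditivity of $F(x,\cdot)$ and never its (non)symmetry, so there is no circularity; completeness of $(M,F)$ is not actually needed for the reasoning — connectedness already makes $d_F$ finite — and serves only to guarantee that $d_F$ is a bona fide quasi-metric.
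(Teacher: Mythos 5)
The paper states this theorem in its preliminaries without giving a proof (it is recalled from the cited literature on metric structures associated to Finsler metrics), so there is no in-paper argument to measure yours against; on its own merits your proof is correct and is essentially the standard one. The sufficiency direction via the length-preserving reversal $\gamma\mapsto\tilde\gamma$ is routine and sound. For necessity, your key lemma $d_F(\gamma(0),\gamma(t))=t\,F(\gamma(0),\dot\gamma(0))+o(t)$ is the right tool, and your justification of the lower bound is careful where it needs to be: the uniform comparison $F(z,\cdot)\ge(1-\varepsilon)F(x,\cdot)$ on a small coordinate ball (legitimate by compactness of the unit sphere and positive homogeneity), the confinement of almost-minimizers to that ball via the uniform lower bound $F\ge c_0\lvert\cdot\rvert$, and the chord estimate $\int_0^1F(x,\dot c)\,d\tau\ge F(x,c(1)-c(0))$, which follows from Jensen's inequality since $F(x,\cdot)$ is convex and positively $1$-homogeneous. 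Applying the same expansion to $\bar F$ and using $d_F(q,p)=d_{\bar F}(p,q)$ then recovers $F(x,-y)$ from the reversed distance, and symmetry of $d_F$ forces $F(x,y)=F(x,-y)$, which together with positive homogeneity gives absolute homogeneity. Your closing observations are also accurate: the expansion uses only positive homogeneity and convexity of $F(x,\cdot)$, so there is no circularity, and completeness plays no essential role in either direction.
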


\section{$(\alpha, \beta)-$ metrics with reversible geodesics}
It is well known that a smooth curve $\gamma :[a,b]\longrightarrow M$ is a constant Finslerian speed geodesic of the Finsler space $(M,F)$ if and only if it satisfies:
\begin{equation}{\label{gs3.1}}
\ddot{\gamma}^i(t)+2G^i\left( \gamma(t), \dot{\gamma}(t)\right) =0,\ i=1,...,n,
\end{equation}
where the functions $$ G^i :TM_0 \longrightarrow \mathbb{R} $$ are given by
\begin{equation}{\label{gs3.2}}
G^i(x,y)=\Gamma^i_{jk}(x,y) y^j y^k,
\end{equation}
with $$ \Gamma^i_{jk}(x,y)=\dfrac{1}{2}\  g^{is} \left( \dfrac{\partial \mathnormal{g}_{sj}}{\partial x^k}+ \dfrac{\partial \mathnormal{g}_{sk}}{\partial x^j}- \dfrac{\partial \mathnormal{g}_{jk}}{\partial x^s} \right).  $$
The vector field $ \Gamma := y^i\dfrac{\partial}{\partial x^i}-2G^i \dfrac{\partial}{\partial y^i} $ with $G^i$ given in (\ref{gs3.2}), is a well defined vector field on $ TM $ whose integral lines are the canonical lifts $ \tilde{\gamma}(t)= \left( \gamma(t), \dot{\gamma}(t)\right) $ of the geodesics satisfying (\ref{gs3.1}). Due to this fact, the vector field $\Gamma$ is called the canonical geodesic spray of the Finsler space $(M,F)$ and $G^i$ are called the coefficients of the geodesic spray $\Gamma.$\\
\begin{definition}{\label{defpe}}
Let $F$ and $\bar{F}$ be two different Finsler metrics on the same manifold $M$ with corresponding geodesic sprays  $ \Gamma := y^i\dfrac{\partial}{\partial x^i}-2G^i \dfrac{\partial}{\partial y^i} $ and $ \bar{\Gamma} := y^i\dfrac{\partial}{\partial x^i}-2\bar{G}^i \dfrac{\partial}{\partial y^i} $ respectively. Then $F$ and $\bar{F}$ are said to be projectively equivalent if their geodesics coincide as set points. 
\end{definition}
Recall (\cite{Cra2005}, \cite{Mas.Sab.Shi2010}) that Euler-Lagrange equation of a Finsler space $(M,F)$ can be written in terms of the geodesic spray as $ \Gamma\left(\dfrac{\partial F}{\partial y^i} \right)-\dfrac{\partial F}{\partial x^i}=0.  $ Therefore, 
in terms of geodesics spray, the condition for projective equivalence given in  definition  (\ref{defpe}) is equivalent to $ \Gamma\left(\dfrac{\partial F}{\partial y^i} \right)-\dfrac{\partial F}{\partial x^i}=0. $\\
Therefore, the notion of reversible geodesics can be re-defined in the following manner:
\begin{definition}
A Finsler space $(M,F)$ is with reversible geodesics  if and only if $F$ and its reverse function $\bar{F}$ are projectively equivalent, i.e., the geodesics of  $F$ and $\bar{F}$  coincide as set points. If $ \bar{\Gamma}$ is the reverse  geodesic spray, then $F$ is with reversible geodesics  if and only if $ \bar{\Gamma}\left(\dfrac{\partial F}{\partial y^i} \right)-\dfrac{\partial F}{\partial x^i}=0.  $
\end{definition}
\begin{definition}
A Finsler space $(M,F)$ is called (locally) projectively flat if all its geodesics are straight lines. An equivalent condition is that the spray coefficients $G^i$ of $F$ can be expressed as $G^i=P(x,y)y^i,$ where $P(x,y)=\dfrac{1}{2} \dfrac{\partial F}{\partial x^k} y^k.$
\end{definition}
In \cite{GHAM1903}, Hamel has given an equivalent characterization of projective flatness of Finsler metric $F$ by the relation 
$$ {F}_{x^my^k}y^m-{F}_{x^k}=0. $$
Let us recall \cite{Sab.Shi2012} the following:
\begin{theorem}{\label{gsthm3.1}}
	Let $F=F_0+ \epsilon \beta$ be a Randers changed Finsler metric, where $F_0$ is an absolute homogeneous $(\alpha, \beta)-$metric. Then, any two of the following properties imply the third one:
\begin{enumerate}
	\item[(i)] $F$ is projectively flat.
	\item[(ii)] $F_0$ is projectively flat.
	\item[(iii)] $ \beta$ is closed.
\end{enumerate}
\end{theorem}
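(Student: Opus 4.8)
The plan is to reduce everything to Hamel's criterion for projective flatness recalled above: a Finsler metric $G$ on $M$ is (locally) projectively flat if and only if $G_{x^m y^k}\,y^m - G_{x^k} = 0$ on $TM_0$. Accordingly, I would introduce the operator $H_k$, defined on smooth functions on $TM_0$ by $H_k(G) := G_{x^m y^k}\,y^m - G_{x^k}$. Since the operations involved --- partial differentiation and contraction with $y^m$ --- are $\mathbb{R}$-linear, $H_k$ is $\mathbb{R}$-linear, so the Randers change $F = F_0 + \epsilon\beta$ yields, for each index $k$,
$$ H_k(F) = H_k(F_0) + \epsilon\, H_k(\beta). $$
By Hamel's criterion, property (i) is equivalent to $H_k(F)\equiv 0$ and property (ii) is equivalent to $H_k(F_0)\equiv 0$; so it remains to identify the vanishing of $H_k(\beta)$ with property (iii), after which the theorem follows by pure linear algebra. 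Note that the equivalences with projective flatness are only invoked for the genuine Finsler metrics $F$ and $F_0$; the symbol $H_k(\beta)$ is used merely as a bookkeeping device via linearity.

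Next I would compute the $\beta$-contribution explicitly. Writing $\beta = b_i(x)\,y^i$, one has $\beta_{y^k} = b_k(x)$, hence $\beta_{x^m y^k} = \partial b_k/\partial x^m$, while $\beta_{x^k} = (\partial b_i/\partial x^k)\,y^i$. Substituting into the definition of $H_k$ gives
$$ H_k(\beta) = \left( \frac{\partial b_k}{\partial x^m} - \frac{\partial b_m}{\partial x^k} \right) y^m, $$
a homogeneous polynomial of degree one in $y$. It vanishes for all $y$ precisely when every coefficient vanishes, i.e. when $\partial b_k/\partial x^m = \partial b_m/\partial x^k$ for all $k,m$; this is exactly the statement that the $1$-form $\beta$ is closed. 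Thus (iii) is equivalent to $H_k(\beta)\equiv 0$.

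With these three equivalences and the identity $H_k(F) = H_k(F_0) + \epsilon H_k(\beta)$ in hand (where $\epsilon\neq 0$, as is implicit in a genuine Randers change), the final step is a three-line case check. If (ii) and (iii) hold then $H_k(F_0)\equiv 0$ and $H_k(\beta)\equiv 0$, so $H_k(F)\equiv 0$, i.e. (i). If (i) and (iii) hold then $H_k(F_0) = H_k(F) - \epsilon H_k(\beta)\equiv 0$, i.e. (ii). If (i) and (ii) hold then $\epsilon H_k(\beta) = H_k(F) - H_k(F_0)\equiv 0$, and dividing by $\epsilon$ gives $H_k(\beta)\equiv 0$, i.e. (iii). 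This covers all three assertions.

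The argument is short, and its only delicate points --- the nearest thing to an obstacle --- are bookkeeping ones: first, Hamel's criterion must be legitimately applicable to both $F$ and $F_0$, which is ensured by the standing assumption (together with Shen's Lemma \ref{gslem2.1}) that $F = F_0 + \epsilon\beta$ is an honest Finsler metric; and second, one must be careful when passing between ``$H_k(\beta)$ vanishes as a function on $TM_0$'' and ``the antisymmetric array $\partial b_k/\partial x^m - \partial b_m/\partial x^k$ vanishes'', which is immediate only because $H_k(\beta)$ is linear in $y$. The single genuine computation needed is the one-line evaluation of $H_k(\beta)$ above, so I expect no serious difficulty; absolute homogeneity of $F_0$ plays no role beyond fixing the class of metrics under consideration.
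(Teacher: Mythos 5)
Your argument is correct and is essentially the standard proof of this result (the paper itself only quotes Theorem \ref{gsthm3.1} from Sabau--Shimada without proof, but their proof is exactly this linearity argument applied to the Hamel operator $H_k$). The one-line computation $H_k(\beta)=\left(\partial b_k/\partial x^m-\partial b_m/\partial x^k\right)y^m$ correctly identifies condition (iii) with $H_k(\beta)\equiv 0$, and the ``any two imply the third'' conclusion then follows from $H_k(F)=H_k(F_0)+\epsilon H_k(\beta)$ with $\epsilon\neq 0$, exactly as you say.
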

By using Hamel's relation, it can be easily proved that if $F$ is projectively flat, then it is with reversible geodesics, but  not conversely, which can be seen by the examples given in \cite{She2009} and \cite{SheYil2008}.
Further, by using theorem (\ref{gsthm3.1}), it can be easily seen that any of the following $(\alpha, \beta)-$metrics are with reversible geodesics if and only if $\beta$ is a closed $1-$form on $M$: 
\begin{enumerate}
	\item[(a)] 
	$$F(\alpha, \beta)=a_0 \alpha + \sum_{k=1}^{p} a_{2k}\ \dfrac{\beta^{2k}}{\alpha^{2k-1}} + \epsilon \beta,$$
	$ \text{where\ }  a_0, a_{2k}, \epsilon\   \text{are non-zero constants} ,\ p\in \mathbb{N}. $
	\item[(b)] Randers metric $ F(\alpha, \beta)=\alpha+ \beta.$
	\item[(c)] Square metric $ F(\alpha, \beta)=\dfrac{(\alpha+ \beta)^2}{\alpha}.$
\end{enumerate}
Next, we recall \cite{Sab.Shi2012} the following:
\begin{theorem}{\label{gsthm3.2}}
Let $M$ be a non-Riemannian $n(\geq 2)-$dimensional Finsler space with $(\alpha, \beta)-$metric $F$, which is not absolute homogeneous. Then $F$ is with reversible geodesics if and only if 
$$ F(\alpha, \beta)=F_0(\alpha, \beta)+\epsilon \beta, $$ where $F_0$ is absolute homogeneous, $\epsilon$ is non-zero constant and $\beta$ is a closed $1-$form on $M.$	
\end{theorem}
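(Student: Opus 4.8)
The plan is to use Shen's representation $F=\alpha\,\phi(s)$, $s=\beta/\alpha$, from Lemma \ref{gslem2.1}, to decompose $\phi$ into its even and odd parts $\phi=\phi_{1}+\phi_{2}$ with $\phi_{1}(s)=\tfrac12(\phi(s)+\phi(-s))$ and $\phi_{2}(s)=\tfrac12(\phi(s)-\phi(-s))$, and to translate the reversibility of the geodesics into a projective identity between the geodesic sprays of $F$ and $\bar F$. I would first note that $\phi_{1}>0$ and that the strong-convexity inequalities of Lemma \ref{gslem2.1} are invariant under $s\mapsto-s$, hence hold for $\phi_{1}$; thus $F_{0}:=\alpha\,\phi_{1}(s)$ is an $(\alpha,\beta)$-metric, which is absolute homogeneous because $\phi_{1}$ is even, while $F$ fails to be absolute homogeneous precisely when $\phi_{2}\not\equiv 0$. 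Since $\bar F(x,y)=F(x,-y)$ yields $\bar G^{i}(x,y)=G^{i}(x,-y)$ for the spray coefficients, and $F$ has reversible geodesics iff $F$ and $\bar F$ are projectively equivalent (Definition \ref{defpe} and the remark after it), the theorem becomes the assertion that $G^{i}(x,y)-G^{i}(x,-y)=P(x,y)\,y^{i}$ for some $P$ if and only if $\phi_{2}(s)=\epsilon s$ with $\epsilon\neq 0$ and $\beta$ is closed.

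The sufficiency direction is the easy one. If $F=F_{0}+\epsilon\beta$ with $F_{0}$ absolute homogeneous, $\epsilon\neq 0$ a constant and $\beta$ closed, then the reverse of $F_{0}$ is $F_{0}$ itself and, since $\beta(x,-y)=-\beta(x,y)$, one has $\bar F=F_{0}-\epsilon\beta$; writing $\epsilon\beta=df$ locally, for every piecewise smooth curve $\gamma$ joining $p$ to $q$ we get $L_{F}(\gamma)=L_{F_{0}}(\gamma)+f(q)-f(p)$ and $L_{\bar F}(\gamma)=L_{F_{0}}(\gamma)-f(q)+f(p)$, so $F$, $F_{0}$ and $\bar F$ all have the same length-minimizers among curves with prescribed endpoints, i.e.\ the same geodesics as point sets. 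In particular $F$ and $\bar F$ are projectively equivalent, so $F$ has reversible geodesics. (Equivalently, adding a closed $1$-form to the Lagrangian $F$ leaves the Euler--Lagrange equations, hence the geodesics, unchanged.)

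For the converse I would substitute Shen's formula for the spray coefficients of an $(\alpha,\beta)$-metric (see \cite{ChernShenRFG}),
$$G^{i}=G^{i}_{\alpha}+\alpha Q\,s^{i}_{\ 0}+\Theta\,(r_{00}-2\alpha Q s_{0})\,\tfrac{y^{i}}{\alpha}+\Psi\,(r_{00}-2\alpha Q s_{0})\,b^{i},$$
where $Q=\phi'/(\phi-s\phi')$, $\Psi=\phi''/\bigl(2[(\phi-s\phi')+(b^{2}-s^{2})\phi'']\bigr)$, and $r_{ij}=\tfrac12(b_{i|j}+b_{j|i})$, $s_{ij}=\tfrac12(b_{i|j}-b_{j|i})$, $s^{i}_{\ 0}=a^{ij}s_{jk}y^{k}$, $s_{0}=s_{i}y^{i}$ are the usual quantities built from the covariant derivative $b_{i|j}$ of $\beta$ with respect to $\alpha$. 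Replacing $y$ by $-y$ sends $s\mapsto-s$, $s^{i}_{\ 0}\mapsto-s^{i}_{\ 0}$, $s_{0}\mapsto-s_{0}$, $y^{i}\mapsto-y^{i}$ while fixing $\alpha$, $b^{i}$, $r_{00}$ and $G^{i}_{\alpha}$, so the terms along $y^{i}$ are automatically of the form $P y^{i}$ and the condition $G^{i}(x,y)-G^{i}(x,-y)=P y^{i}$ reduces to
$$\alpha\,\bigl(Q(s)+Q(-s)\bigr)\,s^{i}_{\ 0}+\Bigl(\Psi(s)\bigl(r_{00}-2\alpha Q(s)s_{0}\bigr)-\Psi(-s)\bigl(r_{00}+2\alpha Q(-s)s_{0}\bigr)\Bigr)b^{i}=0.$$
The even/odd decomposition gives $Q(s)+Q(-s)=\dfrac{2(\phi_{1}\phi_{2}'-\phi_{2}\phi_{1}')}{(\phi(s)-s\phi'(s))(\phi(-s)+s\phi'(-s))}$, whose denominator is positive by the inequalities of Lemma \ref{gslem2.1}; since $\phi_{1}>0$ and $\phi_{2}$ is odd, this coefficient is identically zero only when $\phi_{2}\equiv 0$. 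Hence, $F$ not being absolute homogeneous, $Q(s)+Q(-s)$ is a nonzero function of $s$, and evaluating the displayed identity at the directions where it does not vanish forces $a^{ij}s_{jk}y^{k}$ to be proportional to $b^{i}$ on an open set of $y$; this gives $s_{jk}=b_{j}\ell_{k}$ for some covector $\ell$, and antisymmetry then yields $s_{jk}=0$, i.e.\ $\beta$ is closed. Once $s_{ij}=0$, the leftover condition is $(\Psi(s)-\Psi(-s))\,r_{00}\,b^{i}=0$; assuming $\beta$ is not $\alpha$-parallel, so that $r_{00}\not\equiv 0$, this forces $\Psi$ to be even. Putting $u=\phi_{1}-s\phi_{1}'$, which is positive by Lemma \ref{gslem2.1}, and $v=\phi_{2}-s\phi_{2}'$, so that $u'=-s\phi_{1}''$ and $v'=-s\phi_{2}''$, the evenness of $\Psi$ becomes $v'u-u'v=0$, i.e.\ $v/u$ is constant; being an odd function over an even one, this constant must vanish, so $\phi_{2}-s\phi_{2}'=0$ and therefore $\phi_{2}(s)=\epsilon s$. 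Consequently $F=\alpha\phi_{1}(s)+\epsilon\beta=F_{0}+\epsilon\beta$ with $F_{0}$ absolute homogeneous, $\epsilon\neq 0$ (since $F$ is not absolute homogeneous), and $\beta$ closed.

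The hard part will be this converse direction, and within it the algebraic heart of the matter: showing that the two obstruction coefficients $Q(s)+Q(-s)$ and $\Psi(s)-\Psi(-s)$ can be identically zero only in the trivial (absolute homogeneous) case. This is exactly where the strong-convexity inequalities of Lemma \ref{gslem2.1} are indispensable, since they keep the relevant denominators away from zero and reduce each obstruction to the single ordinary differential equation $\phi_{2}=s\phi_{2}'$. A further subtlety that must be handled separately is the degenerate case $r_{ij}\equiv 0$, that is, $\beta$ parallel with respect to $\alpha$: then $F$ is a Berwald metric sharing the geodesics of $\alpha$, reversibility holds for every admissible $\phi$, and one has to set this case aside (or exclude it by hypothesis) before the precise normal form $F=F_{0}+\epsilon\beta$ can be claimed.
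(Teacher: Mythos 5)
First, a structural point: the paper does not prove Theorem~\ref{gsthm3.2} at all --- it is explicitly \emph{recalled} from \cite{Sab.Shi2012} --- so there is no in-paper argument to compare yours against. Your blind proof in fact reconstructs the strategy of the cited sources \cite{Mas.Sab.Shi2010, Sab.Shi2012}: split $\phi$ into even and odd parts $\phi_1+\phi_2$, note that the Shen inequalities of Lemma~\ref{gslem2.1} are stable under $s\mapsto -s$ so that $F_0=\alpha\phi_1$ is a genuine absolutely homogeneous $(\alpha,\beta)$-metric, express reversibility as projective equivalence of $F$ and $\bar F$ via $\bar G^i(x,y)=G^i(x,-y)$, and substitute Shen's spray formula. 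The sufficiency half is complete and correct (a closed $1$-form added to the Lagrangian does not change the extremals), and your key identities check out: $Q(s)+Q(-s)$ has numerator $2(\phi_1\phi_2'-\phi_2\phi_1')$, which vanishes identically only if $\phi_2\equiv 0$; and $\Psi(s)=\Psi(-s)$ is equivalent to $v'u-u'v=0$ with $u=\phi_1-s\phi_1'$, $v=\phi_2-s\phi_2'$, forcing $v\equiv 0$ and hence $\phi_2(s)=\epsilon s$.

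Three gaps remain in the converse. (1) You pass from ``$\alpha\bigl(Q(s)+Q(-s)\bigr)s^{i}_{\ 0}+(\cdots)\,b^{i}$ is proportional to $y^{i}$'' to ``it vanishes'' without justification; one must contract with $y_i$ (using $s^{i}_{\ 0}y_i=0$) and then invoke the pointwise independence of $s^{i}_{\ 0}$ and the $y$-orthogonal part of $b^{i}$ for generic $y$. That independence is a dimension-$\geq 3$ phenomenon: in dimension $2$ the vectors $y^i, b^i$ already span $T_xM$, the coefficient matching collapses, and the classification is genuinely different --- which is exactly why \cite{Mas.Sab.Shi2012} is a separate paper. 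Your argument therefore does not cover the case $n=2$ that the statement claims. (2) The degenerate case you flag but do not resolve ($s_{ij}=0$ and $r_{00}\equiv 0$, i.e.\ $\beta$ parallel with respect to $\alpha$) is not a removable technicality: there $G^i=G^i_\alpha$ for \emph{every} admissible $\phi$, so such an $F$ has reversible geodesics without being of the form $F_0+\epsilon\beta$; the ``only if'' direction as stated needs this case excluded or listed as an alternative, and a proof cannot close without deciding which. (3) Even granting $r_{00}\not\equiv 0$, you should note that $s=\beta/\alpha$ sweeps out an interval as $y$ varies over the set where $r_{00}\neq 0$, so that $\Psi(s)=\Psi(-s)$ holds as an identity in $s$ (by density and continuity) rather than only at isolated values. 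Fixing (1) and (2) --- by restricting to $n\geq 3$ and adding the non-parallel hypothesis, or by treating those cases separately --- would make this a correct proof along the same lines as the source the paper cites.
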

By using theorem (\ref{gsthm3.2}), it can be easily proved that the $(\alpha, \beta)-$metrics $F(\alpha, \beta)=\alpha\ \phi(s),$ with $\phi(s)$ given by
\begin{enumerate}
	\item[(a)] $ \phi(s)=s^2+s+1, $
	\item[(b)] $ \phi(s)=s^2+2s+1, $
	\item[(c)] $ \phi(s)=cos s+as $\\
	are with reversible geodesics.
\end{enumerate}
Next, we construct some more examples of $(\alpha, \beta)-$metrics with reversible geodesics.
\begin{theorem}{\label{gdthm3.3}}
	Let $F=\alpha\ \phi(s)\ \left( s=\beta/\alpha\right),$ where $\phi :[-r,r]\longrightarrow[0,\infty)$ is a  smooth function. Then in either of the following cases, 
	$F$ is an 	$(\alpha, \beta)-$metric.
	\begin{enumerate}
		\item[(i)] $ \phi(s)=e^s+s+1.$
		\item[(ii)] $ \phi(s)=s^2+2s+2. $ 
		\item[(iii)] $ \phi(s)=s^2+s+2. $ 
		\item [(iv)] $ \phi(s)=(s+2)^2. $
	\end{enumerate}
\end{theorem}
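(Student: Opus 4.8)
The plan is simply to verify, for each of the four choices of $\phi$, the two conditions of Shen's lemma (Lemma~\ref{gslem2.1}): that $\phi$ is a positive $C^{\infty}$ function on an interval $(-b_{0},b_{0})$ with $b_{0}\le 1$, and that
\[
\phi(s)-s\phi'(s)+\bigl(b^{2}-s^{2}\bigr)\phi''(s)>0,\qquad \phi(s)-s\phi'(s)>0,
\]
for all $|s|\le b<b_{0}$. Since our standing assumption forces the Riemannian length of $\beta$ to satisfy $b<1$, it is enough to check these inequalities on the interval $|s|<1$, and one may take $b_{0}=1$ throughout.

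First I would isolate one observation that disposes of condition~(\ref{2.2}) with almost no work. Under the constraint $|s|\le b$ one has $b^{2}-s^{2}\ge 0$, while in every one of the four cases the second derivative is nonnegative: $\phi''(s)=e^{s}>0$ in (i) and $\phi''(s)=2>0$ in (ii)--(iv). Hence $(b^{2}-s^{2})\phi''(s)\ge 0$, so~(\ref{2.2}) follows immediately once~(\ref{2.3}) is established. The theorem therefore reduces to checking that $\phi>0$ and $\phi(s)-s\phi'(s)>0$ on $|s|<1$ in each case.

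Second, I would carry out these elementary verifications case by case. A short computation gives $\phi(s)-s\phi'(s)=e^{s}(1-s)+1$ in (i), $\phi(s)-s\phi'(s)=2-s^{2}$ in (ii) and (iii), and $\phi(s)-s\phi'(s)=4-s^{2}$ in (iv); each is plainly positive for $|s|<1$ (in (i) because $e^{s}>0$ and $1-s>0$, in the other cases because $s^{2}<2\le 4$). Positivity of $\phi$ itself is equally transparent: $\phi(s)=e^{s}+(s+1)>0$ for $s>-1$ in (i), $\phi(s)=(s+1)^{2}+1$ in (ii), $\phi(s)=s^{2}+s+2$ has negative discriminant $1-8<0$ in (iii), and $\phi(s)=(s+2)^{2}\ge 1$ for $s\ge -1$ in (iv). Invoking Lemma~\ref{gslem2.1} with $b_{0}=1$ then shows that $F=\alpha\,\phi(s)$ is a Finsler $(\alpha,\beta)$-metric in every case.

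There is no genuine obstacle here; the only point requiring a modicum of care is the bookkeeping of domains, i.e.\ confirming that the bound $b_{0}=1$ coming from $\lVert\beta_{x}\rVert_{\alpha}<b_{0}$ is compatible with the sign requirements above. It is: in case (i) the factors $s+1$ and $1-s$ already restrict us to $|s|<1$, while in cases (ii)--(iv) the relevant inequalities even hold on the larger interval $|s|<\sqrt{2}$. So the ``hard part'', such as it is, lies in organising the four routine checks rather than in any substantive estimate.
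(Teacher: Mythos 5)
Your proposal is correct and follows exactly the route the paper takes: verify conditions (\ref{2.2}) and (\ref{2.3}) of Shen's lemma (Lemma~\ref{gslem2.1}) for each $\phi$. The paper merely asserts that these checks ``can be easily seen''; you actually carry them out, and your observation that $\phi''\ge 0$ in every case reduces (\ref{2.2}) to (\ref{2.3}) is a clean way to organise the verification.
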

\begin{proof}
	In either case, it can be easily seen that the function $\phi$ satisfies the conditions (\ref{2.2}) and (\ref{2.3}) of Shen's lemma (\ref{gslem2.1}), which is a criterion for $F=\alpha \phi(s)$ to be an $(\alpha, \beta)-$metric. Therefore, in either case, $F$ is an   $(\alpha, \beta)-$metric.
\end{proof}
\begin{theorem}{\label{gsthm3.4}}
	Let $\left( M,F(\alpha, \beta)\right) $ be a non-Riemannian $n(\geq 3)-$dimensional Finsler space with $\beta,$ a closed $1-$form.  Then in either of the following cases, $F=\alpha \phi(s)$ is with reversible geodesics.
\begin{enumerate}
	\item[(i)] $ \phi(s)=s^2+2s+2. $ 
	\item[(ii)] $ \phi(s)=s^2+s+2. $
	\item [(iii)] $ \phi(s)=(s+2)^2. $ 
\end{enumerate}	
\end{theorem}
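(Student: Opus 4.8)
The plan is to reduce the statement to Theorem \ref{gsthm3.2}. By Theorem \ref{gdthm3.3} (the three functions here are cases (ii), (iii), (iv) there), in each case $F=\alpha\,\phi(\beta/\alpha)$ is an $(\alpha,\beta)$-metric, and it is non-Riemannian by hypothesis. Hence, by Theorem \ref{gsthm3.2}, it suffices to write each $F$ in the form $F=F_0+\epsilon\beta$ with $F_0$ an absolute homogeneous $(\alpha,\beta)$-metric and $\epsilon\neq 0$ a constant; once this is done, the fact that $\beta$ is assumed closed lets Theorem \ref{gsthm3.2} conclude that $F$ is with reversible geodesics.

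First I would split each polynomial $\phi$ into its even part $\phi_0$ and its odd part, observing that in all three cases the odd part is purely linear, say $\phi(s)=\phi_0(s)+\epsilon s$. Using $\alpha s=\beta$, this gives $F=\alpha\,\phi(s)=\alpha\,\phi_0(s)+\epsilon\beta=:F_0+\epsilon\beta$. Concretely, case (i) gives $\phi_0(s)=s^2+2$ and $\epsilon=2$, so $F_0=\beta^2/\alpha+2\alpha$; case (ii) gives $\phi_0(s)=s^2+2$ and $\epsilon=1$, so $F_0=\beta^2/\alpha+2\alpha$; case (iii) gives $\phi_0(s)=s^2+4$ and $\epsilon=4$, so $F_0=\beta^2/\alpha+4\alpha$. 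In each case $\epsilon\neq 0$, and since $\phi$ has a non-zero linear term we have $\phi(-s)\neq\phi(s)$, i.e. $F(x,-y)=\alpha\,\phi(-s)\neq\alpha\,\phi(s)=F(x,y)$, so $F$ is genuinely not absolute homogeneous — one of the hypotheses needed for Theorem \ref{gsthm3.2}.

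Next I would verify that $F_0=\alpha\,\phi_0(s)$ is an absolute homogeneous $(\alpha,\beta)$-metric. Absolute homogeneity is immediate from the evenness of $\phi_0$: $F_0(x,-y)=\alpha(x,y)\,\phi_0(-s)=\alpha(x,y)\,\phi_0(s)=F_0(x,y)$. That $F_0$ is a Finsler metric follows from Shen's Lemma \ref{gslem2.1}: for $\phi_0(s)=s^2+c$ with $c\in\{2,4\}$ one has $\phi_0(s)-s\phi_0'(s)=c-s^2$ and $\phi_0(s)-s\phi_0'(s)+(b^2-s^2)\phi_0''(s)=c+2b^2-3s^2$, and for $|s|\le b<1$ and $c\ge 2$ both expressions are positive (e.g. $c+2b^2-3s^2\ge c-b^2>0$), which is exactly conditions (\ref{2.2}) and (\ref{2.3}). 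Having checked that $F$ is non-Riemannian and not absolute homogeneous, that $F=F_0+\epsilon\beta$ with $F_0$ absolute homogeneous and $\epsilon\neq 0$, and that $\beta$ is closed, Theorem \ref{gsthm3.2} applies in each of the three cases and yields that $F$ is with reversible geodesics.

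I do not expect a serious obstacle. The two points that genuinely need care are: (a) confirming that the odd part of every $\phi$ in the list is \emph{exactly} a constant multiple of $s$, so that $F-F_0$ really is a closed $1$-form times a constant, as Theorem \ref{gsthm3.2} demands (if a cubic term were present the same theorem would instead force non-reversibility); and (b) the short positivity computation confirming that the even remainder $F_0$ is itself a legitimate absolute homogeneous $(\alpha,\beta)$-metric. I would also note in passing that the hypothesis $n\ge 3$ is stronger than necessary here: $n\ge 2$ already suffices, that being all Theorem \ref{gsthm3.2} requires.
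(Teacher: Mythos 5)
Your proposal is correct and follows essentially the same route as the paper: in each case one writes $F=F_0+\epsilon\beta$ with $F_0=\beta^2/\alpha+2\alpha$ (cases (i), (ii)) or $F_0=\beta^2/\alpha+4\alpha$ (case (iii)) and invokes Theorem \ref{gsthm3.2}. Your additional checks --- that $F_0$ itself satisfies Shen's lemma and that $F$ is genuinely not absolute homogeneous --- are welcome extra care that the paper's proof leaves implicit, but they do not change the argument.
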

\begin{proof}
\begin{enumerate}
	\item[(i)] 
We have	$ \phi(s)=s^2+2s+2, $\\
therefore,
\begin{align*}
F(\alpha, \beta)
&=\dfrac{\beta^2}{\alpha} +2\beta+2\alpha\\
&=F_0(\alpha, \beta) +2\beta,\ \ \text{where}\ \ F_0(\alpha, \beta)=\dfrac{\beta^2}{\alpha} +2\alpha
\end{align*} 
It can be easily seen that $F_0$ is absolute homogeneous. Then by theorem (\ref{gsthm3.2}), $F$  is with reversible geodesics.
\item[(ii)]
We have	 $ \phi(s)=s^2+s+2, $\\
therefore,
\begin{align*}
F(\alpha, \beta)
&=\dfrac{\beta^2}{\alpha} +\beta+2\alpha\\
&=F_0(\alpha, \beta) +\beta,\ \ \text{where}\ \ F_0(\alpha, \beta)=\dfrac{\beta^2}{\alpha} +2\alpha
\end{align*} 
Clearly, $F_0$ is absolute homogeneous. Then by theorem (\ref{gsthm3.2}), $F$  is with reversible geodesics.	
\item [(iii)] We have $ \phi(s)=(s+2)^2=s^2+4s+4 $ \\
therefore, 
\begin{align*}
F(\alpha, \beta)
&=\dfrac{\beta^2}{\alpha}+4(\alpha +\beta)\\
&=F_0(\alpha, \beta) +4\beta,\ \ \text{where}\ \ F_0(\alpha, \beta)=\dfrac{\beta^2}{\alpha}+4\alpha
\end{align*} 
Clearly, $F_0$ is absolute homogeneous. Therefore, by theorem (\ref{gsthm3.2}), $F$  is with reversible geodesics.
\end{enumerate}
\end{proof}
\section{Weighted quasi-metrics associated with Finsler metrics}
It is well known that Riemannian spaces can be represented as metrics spaces. It means that if $(M,\alpha)$ is a Riemannian space, then one can induce a metric $d_\alpha : M\times M \longrightarrow [0,\infty)$ on $M,$ given by\\
\begin{equation}{\label{gs4.1}}
d_\alpha(x,y) := \inf_{\gamma \in \Gamma_{xy}} \int_{a}^{b} \alpha \left( \gamma(t), \dot{\gamma}(t)\right)  dt, 
\end{equation}
where
$$ \Gamma_{xy} := \bigg\{ \gamma :[a,b] \longrightarrow M\  | \ \ \gamma\ (\text{piecewise smooth curve}),\ \gamma(a)=x,\ \gamma(b)=y \bigg\}$$
is a set of curves joining $x$ and $y,$\  $ \dot{\gamma}(t) :=\dfrac{d \gamma(t)}{dt}$ the tangent vector to $\gamma$ at $\gamma(t),$ and $\alpha(x,X)$ the Riemannian norm of the vector $X \in T_xM.$  One can easily verify that $d_\alpha$ is a metric on $M,$ i.e., it satisfies the following properties:
\begin{enumerate}
	\item [(a)] \textbf{Positiveness:} $ d_\alpha(x,y)>0 $ if $x \neq y,  \ d_\alpha(x,x)=0,$ 
	\item [(b)] \textbf{Symmetry:} $d_\alpha(x,y)=d_\alpha(y,x),$
	\item [(c)]	\textbf{Triangle inequality: } $ d_\alpha(x,y) \leq d_\alpha(x,z)+ d_\alpha(z,y) $\\
	for any  $x,y,z \in M.$
\end{enumerate}
The manifold $M,$  equipped with metric $d_\alpha,$ i.e., $(M, d_\alpha)$ is called the induced metric space.\\
One knows that Finsler structures are more general structures than Riemannian structures. Therefore, similar to Riemannian case, it is natural to induce a metric  $d_F$ on a Finsler space $(M,F),$ given by 
\begin{equation}{\label{gs4.2}}
d_F : M\times M \longrightarrow [0,\infty),\ \ d_F(x,y) := \inf_{\gamma \in \Gamma_{_{xy}}} \int_{a}^{b} F \left( \gamma(t), \dot{\gamma}(t)\right)  dt. 
\end{equation}
But unfortunately, in this case $d_F$ is not symmetric, i.e., $d_F(x,y) \neq d_F(y,x)$ in general, unlike the Riemannian case. Here, we find that $d_F$ is a special case of quasi-metric space, which is defined as follows \cite{Kii.Vaj1994}:
\begin{definition}
	Let $M$ be any set. Then a function $d : M\times M \longrightarrow [0,\infty)$ is a quasi-metric on $M,$ if it satisfies the following axioms:
	\begin{enumerate}
		\item [(a)] \textbf{Positiveness:} $ d(x,y)>0 $ if $x \neq y,  \ d(x,x)=0,$ 
		\item [(b)]	\textbf{Triangle inequality: } $ d(x,y) \leq d(x,z)+ d(z,y), $ 
		\item [(c)] \textbf{Separation axiom:} $d(x,y)=d(y,x)=0\ \implies\ x=y$ \\
		for any  $x,y,z \in M.$
	\end{enumerate}
The space equipped with quasi metric is called quasi-metric space. 
\end{definition}
Here, it is to be noted that every metric is a quasi-metric but not conversely. A quasi-metric $d$ that satisfies the symmetry condition, i.e., $d(x,y)=d(y,x)$
 for all $x,y \in M,$ becomes a metric on $M.$
 It can be easily seen that in case of Riemannian and absolute homogeneous Finsler spaces, every quasi-metric is a metric on $M.$\\
One special class of quasi-metric spaces are the so called weighted quasi-metric spaces $(M,d,\omega),$ where $d$ is quasi-metric on $M$ and for each $d,$ there exist a function $\omega : M \longrightarrow[0,\infty),$ known as weight of $d,$ that satisfies 
\begin{enumerate}
	\item [(d)] \textbf{Weightability:} $d(x,y) +\omega(x)=d(y,x)+ \omega(y) \ \forall \ \ x,y\  \in M.$
\end{enumerate}
If the weight function $\omega$ is defined from $ M$ to $\mathbb{R},$ then $\omega$ is known as generalized weight. A generalized weighted quasi-metric space is just a weighted quasi metric space in which both the quasi metric and the weight function are not necessarily non- negative.\\
The notion of weighted quasi-metric spaces was first introduced by Matthews \cite{Mat1993} in the context of theoretical computer science. In \cite{Kii.Vaj1994}, K$\ddot{u}$nzi and Vajner have studied the topological properties of weighted quasi-metric spaces in detail.
\begin{theorem}{\label{gsthm4.1}}
Let $(M,F)$ be a smooth, simply connected Finsler $n-$manifold, where $F$ is either of the following:
\begin{enumerate}
	\item[(i)]  $F(\alpha, \beta)=\dfrac{(\alpha+\beta)^2}{\alpha},$
	\item[(ii)]  $F(\alpha, \beta)=\alpha \cos \left(\dfrac{\beta}{\alpha}\right)+a\beta,$
	\item[(iii)]  $F(\alpha, \beta)= \alpha \left( \dfrac{\beta}{\alpha} +2\right) ^2,$
	\item[(iv)]  $F(\alpha, \beta)=\alpha \left( \dfrac{\beta^2}{\alpha^2} +\dfrac{\beta}{\alpha}+2\right) ,$
	\item [(v)] $F(\alpha, \beta)=\alpha \left( \dfrac{\beta^2}{\alpha^2} +\dfrac{2 \beta}{\alpha}+2\right).$
\end{enumerate}	
Then, for an exact $1-$form $\beta,$ in either case, $F$ induces a weighted quasi-metric $d_F$ on $M.$ 
\end{theorem}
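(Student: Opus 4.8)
The plan is to handle all five cases uniformly by observing that each $F$ splits as $F = F_0 + \epsilon\,\beta$, where $F_0$ is an absolutely homogeneous $(\alpha,\beta)$-metric and $\epsilon$ is a nonzero constant. For (iii), (iv), (v) this is exactly the decomposition produced in the proof of Theorem \ref{gsthm3.4} (with $\epsilon = 4, 1, 2$ and $F_0 = \beta^2/\alpha + c\,\alpha$ for the appropriate $c$). For (i), $F = (\alpha+\beta)^2/\alpha = \bigl(\alpha + \beta^2/\alpha\bigr) + 2\beta$, so $F_0 = \alpha + \beta^2/\alpha$ is absolutely homogeneous; for (ii), $F = \alpha\cos(\beta/\alpha) + a\beta$, so $F_0 = \alpha\cos(\beta/\alpha)$ is absolutely homogeneous since the cosine is even. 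In every case $F$ is a genuine Finsler metric: this is classical for (i) and (ii), and for (iii)--(v) it follows from Shen's Lemma \ref{gslem2.1} exactly as in Theorem \ref{gdthm3.3}. Consequently the induced distance $d_F$ of (\ref{gs4.2}) is a quasi-metric --- non-negativity and the triangle inequality are immediate from $F \ge 0$ and concatenation of curves, while $d_F(x,y) = 0 \Rightarrow x = y$ (which gives the positiveness and separation axioms) is the standard Finsler fact obtained by locally comparing $F$ with a Riemannian metric via strong convexity.

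The decisive point is the hypothesis that $\beta$ is exact, say $\beta = df$ with $f \in C^\infty(M)$ (on the simply connected $M$ this coincides with $\beta$ being closed, the condition under which these metrics have reversible geodesics). For any piecewise smooth $\gamma : [a,b] \to M$ with $\gamma(a) = x$ and $\gamma(b) = y$, the fundamental theorem of calculus gives $\int_a^b \beta(\gamma,\dot\gamma)\,dt = \int_a^b \tfrac{d}{dt}\bigl(f\circ\gamma\bigr)\,dt = f(y) - f(x)$, a quantity independent of $\gamma$. Hence $L_F(\gamma) = L_{F_0}(\gamma) + \epsilon\,(f(y)-f(x))$, and upon taking the infimum over $\gamma \in \Gamma_{xy}$ the constant term factors out of the infimum, so that
\[ d_F(x,y) = d_{F_0}(x,y) + \epsilon\,\bigl(f(y) - f(x)\bigr), \qquad x, y \in M. \]
Moreover $d_{F_0}$ is symmetric: since $F_0$ is absolutely homogeneous, $F_0(x,-v) = F_0(x,v)$, so reversing a curve preserves its $F_0$-length and $d_{F_0}(x,y) = d_{F_0}(y,x)$.

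It remains to exhibit the weight. Define $\omega : M \to \mathbb{R}$ by $\omega(x) := 2\epsilon\, f(x)$ (one may add any constant, and if $f$ is bounded below --- e.g. when $M$ is compact --- the constant can be chosen so that $\omega \ge 0$; in general $\omega$ is a generalized weight in the sense introduced above). Using the displayed formula for $d_F$ together with the symmetry of $d_{F_0}$,
\[ d_F(x,y) + \omega(x) = d_{F_0}(x,y) + \epsilon\bigl(f(x)+f(y)\bigr) = d_{F_0}(y,x) + \epsilon\bigl(f(y)+f(x)\bigr) = d_F(y,x) + \omega(y), \]
which is precisely the weightability axiom (d). Hence $(M, d_F, \omega)$ is a weighted quasi-metric space, proving the theorem in all five cases. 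The only case-by-case work lies in the first paragraph --- checking Shen's conditions (\ref{2.2})--(\ref{2.3}) for each $\phi$ (so that $d_F$ is genuinely a quasi-metric, with positiveness the delicate axiom) and exhibiting the splitting $F = F_0 + \epsilon\beta$ with $F_0$ absolutely homogeneous. I expect this to be the main, though essentially routine, obstacle; once it is in place, exactness of $\beta$ reduces the whole statement to the elementary path-independence computation, and the weight is forced to be $2\epsilon f$ up to an additive constant.
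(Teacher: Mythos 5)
Your proposal is correct and follows essentially the same route as the paper: split $F = F_0 + \epsilon\beta$ with $F_0$ absolutely homogeneous, use exactness of $\beta$ to make $\int_\gamma \beta$ path-independent, and read the weight off the resulting asymmetry of $d_F$. The only differences are cosmetic-to-beneficial: you integrate $\beta = df$ by the fundamental theorem of calculus and take the infimum over all curves, which yields the clean identity $d_F(x,y) = d_{F_0}(x,y) + \epsilon\left(f(y)-f(x)\right)$ and the explicit weight $2\epsilon f$, whereas the paper assumes a minimizing geodesic exists, invokes Stokes' theorem on domains bounded by geodesic bigons and triangles, and anchors the weight at a base point as $\omega_a(x) = d_F(a,x) - d_F(x,a)$ --- the two weights agree up to an additive constant.
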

\begin{proof}
\begin{enumerate}
\item[(i)] 
	We have  
\begin{align*}
F(\alpha, \beta)
&=\dfrac{(\alpha+\beta)^2}{\alpha}\\
&=F_0(\alpha, \beta) +2\beta,
\end{align*}
where\ \ $F_0(\alpha, \beta)= \alpha+\dfrac{\beta^2}{\alpha},\ \alpha = \sqrt{a_{ij}(x) y^i y^j} $ is a Riemannian metric and  $\beta=b_i(x) y^i$ is an exact $1-$form. \\
Clearly $F_0$ is absolute homogeneous, and hence by theorem (\ref{gsthm3.2}), $F$ is with reversible geodesics.\\
Let $\gamma_{_{xy}} \in \Gamma_{_{xy}}$ be an $F-$geodesic, which is in the same time an $F_0-$geodesic, then from equation (\ref{gs4.2}), we have
\begin{equation}{\label{gs4.3}}
\begin{split}
d_F(x,y)
&= \int_{a}^{b} F \left( \gamma_{_{xy}}(t), \dot{\gamma}_{_{xy}}(t)\right)  dt\\
&= \int_{a}^{b} F_0 \left( \gamma_{_{xy}}(t), \dot{\gamma}_{_{xy}}(t)\right)  dt\ +2  \int_{a}^{b} b_i \left( \gamma_{_{xy}}(t)\right) \dot{\gamma}^i_{_{xy}}(t) dt\\ 
&= \int_{\gamma_{_{xy}}} \left( \alpha+\dfrac{\beta^2}{\alpha}\right) +2 \int_{\gamma_{_{xy}}} \beta. 
\end{split}
\end{equation}	
For a fixed point  $a \in M,$ let us define a function $\omega_a : M \longrightarrow \mathbb{R}$ given by $ \omega_a(x)=d_F(a,x)-d_F(x,a).$ Then from equation (\ref{gs4.3}), it follows that 
\begin{equation*}
\omega_a(x)= \int_{\gamma_{_{ax}}} \left( \alpha+\dfrac{\beta^2}{\alpha}\right) +2 \int_{\gamma_{_{ax}}} \beta -\int_{\gamma_{_{xa}}} \left( \alpha+\dfrac{\beta^2}{\alpha}\right) -2 \int_{\gamma_{_{xa}}} \beta
\end{equation*}
Due to absolute homogeneous property of $F_0,$ above equation takes the form 	
\begin{equation}{\label{gs4.4}}
\begin{split}
\omega_a(x)
&= 2 \int_{\gamma_{_{ax}}} \beta -2 \int_{\gamma_{_{xa}}} \beta\\
&= 4 \int_{\gamma_{_{ax}}} \beta
=-4 \int_{\gamma_{_{xa}}} \beta,
\end{split}
\end{equation}	
where we have used Stoke's theorem for the one-form $\beta$ on the  closed 	domain $D$ with boundary $\partial D := \gamma_{_{ax}} \cup \gamma_{_{xa}}.$ 
 The necessary  and sufficient condition for this to be well defined is that the path integral in R. H. S. of equation (\ref{gs4.4}) is path independent, i.e., one-form $\beta$ is exact.\\
 Further, we have 
\begin{equation}{\label{gs4.5}}
\begin{split}
d_F(x,y)+\omega_a(x)
&= \int_{\gamma_{_{xy}}} \left( \alpha+\dfrac{\beta^2}{\alpha}\right) +2 \int_{\gamma_{_{xy}}} \beta +  2 \int_{\gamma_{_{ax}}} \beta -2 \int_{\gamma_{_{xa}}} \beta\\
&= \int_{\gamma_{_{xy}}} \left( \alpha+\dfrac{\beta^2}{\alpha}\right) -2 \int_{\gamma_{_{xa}}} \beta -2 \int_{\gamma_{_{ya}}} \beta, 
\end{split}
\end{equation}		
where we have again used Stoke's theorem for the one-form $\beta$ on the  closed 	domain with boundary $\gamma_{_{ax}} \cup \gamma_{_{xy}} \cup \gamma_{_{ya}}.$	\\
Similarly, we can find
\begin{equation}{\label{gs4.6}}
\begin{split}
d_F(y,x)+\omega_a(y)
&= \int_{\gamma_{_{yx}}} \left( \alpha+\dfrac{\beta^2}{\alpha}\right) -2 \int_{\gamma_{_{ya}}} \beta -2 \int_{\gamma_{_{xa}}} \beta
\end{split}
\end{equation}		
for the one-form $\beta$ on the  closed 	domain with boundary $\gamma_{_{ay}} \cup \gamma_{_{yx}} \cup \gamma_{_{xa}}.$	\\
From the equations (\ref{gs4.5}) and (\ref{gs4.6}), we conclude that $d_F$ is a weighted quasi-metric with generalized weight $\omega_a.$
\item[(ii)] 
In this case, we have 
\begin{align*}
F(\alpha, \beta)
&=\alpha \cos \left(\dfrac{\beta}{\alpha}\right)+a\beta\\
&=F_0(\alpha, \beta) +a\beta,
\end{align*}
where\ \ $F_0(\alpha, \beta)= \alpha \cos \left(\dfrac{\beta}{\alpha}\right),\ \alpha = \sqrt{a_{ij}(x) y^i y^j} $ is a Riemannian metric,  $\beta=b_i(x) y^i$ is an exact $1-$form and $a$ is a positive non-zero constant.
Clearly $F_0$ is absolute homogeneous, and hence by theorem (\ref{gsthm3.2}), $F$ is with reversible geodesics.\\
Let $\gamma_{_{xy}} \in \Gamma_{_{xy}}$ be an $F-$geodesic, which is in the same time an $F_0-$geodesic, then from equation (\ref{gs4.2}), we have
\begin{equation}{\label{gs4.7}}
\begin{split}
d_F(x,y)
&= \int_{\gamma_{_{xy}}} \left( \alpha \cos \left(\dfrac{\beta}{\alpha}\right)\right) +a \int_{\gamma_{_{xy}}} \beta. 
\end{split}
\end{equation}	
For a fixed point $b \in M,$ let us define a function $\omega_b : M \longrightarrow \mathbb{R}$ given by $ \omega_b(x)=d_F(b,x)-d_F(x,b).$ Then from equation (\ref{gs4.7}), we get 
\begin{equation*}
\omega_{_b}(x)= \int_{\gamma_{_{bx}}} \left(\alpha \cos \left(\dfrac{\beta}{\alpha}\right)\right) +a \int_{\gamma_{_{bx}}} \beta -\int_{\gamma_{_{xb}}} \left(\alpha \cos \left(\dfrac{\beta}{\alpha}\right)\right) -a \int_{\gamma_{_{xb}}} \beta.
\end{equation*}
Due to absolute homogeneous property of $F_0,$ the above equation takes the form 	
\begin{equation}{\label{gs4.8}}
\begin{split}
\omega_{_b}(x)
&= a \int_{\gamma_{_{bx}}} \beta -a \int_{\gamma_{_{xb}}} \beta\\
&= 2a \int_{\gamma_{_{bx}}} \beta 
=-2a \int_{\gamma_{_{xb}}} \beta,
\end{split}
\end{equation}	
where  Stoke's theorem has been used for the one-form $\beta$ on the  closed 	domain $D$ with boundary $\partial D := \gamma_{_{bx}} \cup \gamma_{_{xb}}.$ 
 This is  well defined if and only if the path integral in R. H. S. of equation (\ref{gs4.8}) is path independent, i.e., one-form $\beta$ is exact.\\
Further, we have 
\begin{equation}{\label{gs4.9}}
\begin{split}
d_F(x,y)+\omega_{_b}(x)
&= \int_{\gamma_{_{xy}}} \left(\alpha \cos \left(\dfrac{\beta}{\alpha}\right)\right) +a \int_{\gamma_{_{xy}}} \beta +  a \int_{\gamma_{_{bx}}} \beta -a \int_{\gamma_{_{xb}}} \beta\\
&= \int_{\gamma_{_{xy}}} \left(\alpha \cos \left(\dfrac{\beta}{\alpha}\right)\right) -a \int_{\gamma_{_{xb}}} \beta -a \int_{\gamma_{_{yb}}} \beta, 
\end{split}
\end{equation}		
where we have again used Stoke's theorem for the one-form $\beta$ on the  closed 	domain with boundary $\gamma_{_{bx}} \cup \gamma_{_{xy}} \cup \gamma_{_{yb}}.$	\\
Similarly, we can find
\begin{equation}{\label{gs4.10}}
\begin{split}
d_F(y,x)+\omega_{_b}(y)
&= \int_{\gamma_{_{yx}}} \left(\alpha \cos \left(\dfrac{\beta}{\alpha}\right)\right) -a \int_{\gamma_{_{yb}}} \beta -a \int_{\gamma_{_{xb}}} \beta.
\end{split}
\end{equation}		
From the equations (\ref{gs4.9}) and (\ref{gs4.10}), we conclude that $d_F$ is a weighted quasi-metric with generalized weight $\omega_{_b}.$
\end{enumerate}	
In similar manner, we can easily prove  that, in all remaining cases, the Finsler metric $F$ induces a weighted quasi-metric $d_F$ on $M$.	
\end{proof}
Next, we discuss an interesting geometric property of weighted quasi-metric spaces. \\
Let $(M,F)$ be a Finsler space and $d_F,$ a quasi-metric, induced on $M$ by $F.$ We define
$$ \rho(x,y)=\dfrac{d_F(x,y)+d_F(y,x)}{2}, $$ which is called symmetrization of $d_F.$\\
Next, we recall (\cite{Sha.Bab2017}, \cite{Vit1995}) the following lemma for later use:
\begin{lemma}{\label{gslem4.1}}
	Let $(M,d)$ be any quasi-metric space. Then $d$ is weightable if and only if there exist a function $\omega : M \longrightarrow [0,\infty) $ such that
\begin{equation}{\label{gs4.11}}
d(x,y)=\rho(x,y)+\dfrac{1}{2}\left[ \omega(y)- \omega (x) \right]\ \ \forall \ x,y \in M, 
\end{equation}
where $\rho$ is the symmetrized distance of $d.$ Moreover, \\
we have
\begin{equation}{\label{gs4.12}}
\dfrac{1}{2}\lvert \omega(x)- \omega (y) \rvert \leq \rho(x,y)\ \ \forall \ x,y \in M. 
\end{equation}
\end{lemma}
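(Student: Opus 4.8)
The plan is to establish the stated equivalence by purely algebraic manipulation of the weightability axiom $(d)$ together with the definition of the symmetrization $\rho$, and then to read off the inequality $(\ref{gs4.12})$ from the non-negativity of $d$. No use of the triangle inequality or the separation axiom is needed.

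First I would prove the forward implication. Assuming $d$ is weightable, there is a function $\omega : M \longrightarrow [0,\infty)$ with $d(x,y)+\omega(x)=d(y,x)+\omega(y)$ for all $x,y \in M$; equivalently $d(x,y)-d(y,x)=\omega(y)-\omega(x)$. Since $d(x,y)+d(y,x)=2\rho(x,y)$ by the very definition of $\rho$, adding these two identities gives $2d(x,y)=2\rho(x,y)+\omega(y)-\omega(x)$, which is exactly $(\ref{gs4.11})$. For the converse, if some $\omega : M \longrightarrow [0,\infty)$ satisfies $(\ref{gs4.11})$, then $d(x,y)+\omega(x)=\rho(x,y)+\tfrac{1}{2}\big(\omega(x)+\omega(y)\big)$, and because $\rho$ is symmetric the right-hand side is unchanged when $x$ and $y$ are interchanged; hence it also equals $d(y,x)+\omega(y)$, so axiom $(d)$ holds and $d$ is weightable with weight $\omega$.

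For the ``moreover'' part I would use only that $d \geq 0$. From $(\ref{gs4.11})$, the inequality $0 \leq d(x,y)=\rho(x,y)+\tfrac{1}{2}\big(\omega(y)-\omega(x)\big)$ yields $\tfrac{1}{2}\big(\omega(x)-\omega(y)\big)\leq \rho(x,y)$; interchanging $x$ and $y$ and using $\rho(x,y)=\rho(y,x)$ gives $\tfrac{1}{2}\big(\omega(y)-\omega(x)\big)\leq \rho(x,y)$. Taking these two bounds together produces $(\ref{gs4.12})$. I do not expect any genuine obstacle here: the argument is essentially bookkeeping, and the only point deserving a line of care is that one and the same $\omega$ serves simultaneously in axiom $(d)$ and in $(\ref{gs4.11})$, so that its range (already $[0,\infty)$) needs no adjustment and the equivalence is between these two properties for a fixed weight rather than up to modification of $\omega$.
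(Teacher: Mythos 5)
Your proof is correct. The paper does not actually prove this lemma --- it only recalls it from the cited references (Vitolo; Shanker--Baby) --- and your argument is precisely the standard one: the forward direction by adding the weightability identity $d(x,y)-d(y,x)=\omega(y)-\omega(x)$ to $d(x,y)+d(y,x)=2\rho(x,y)$, the converse by observing that $d(x,y)+\omega(x)=\rho(x,y)+\tfrac{1}{2}\left(\omega(x)+\omega(y)\right)$ is symmetric in $x$ and $y$, and the inequality $(\ref{gs4.12})$ from $d\geq 0$ applied to $(\ref{gs4.11})$ with the roles of $x$ and $y$ interchanged. Nothing is missing.
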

Below, we give some examples in support of lemma \ref{gslem4.1}.
\begin{example}
Let $(M,F)$ be an $n-$dimensional Finsler space with either of the following metric
\begin{enumerate}
	\item [(i)] $F=\dfrac{(\alpha+\beta)^2}{\alpha},$
	\item [(ii)] $F=\alpha \cos \left(\dfrac{\beta}{\alpha}\right) +a \beta.$
\end{enumerate}
In either case, from theorem \ref{gsthm4.1}, we see that $d_F$ is weighted quasi-metric induced by $F$ on $M$. By using equations (\ref{gs4.3}) and (\ref{gs4.4}), one can easily verify that $d_F$ satisfies equations (\ref{gs4.11}) and (\ref{gs4.12}) of lemma \ref{gslem4.1}. Therefore, $d_F$ is weightable. 
\end{example}
\begin{theorem}
	Let $(M,F)$ be a smooth, simply connected Finsler $n-$manifold and $F$ is either of the following:
	\begin{enumerate}
		\item[(i)]  $F(\alpha, \beta)=\dfrac{(\alpha+\beta)^2}{\alpha},$
		\item[(ii)]  $F(\alpha, \beta)=\alpha \cos \left(\dfrac{\beta}{\alpha}\right)+a\beta,$
		\item[(iii)]  $F(\alpha, \beta)= \alpha \left( \dfrac{\beta}{\alpha} +2\right) ^2,$
		\item[(iv)]  $F(\alpha, \beta)=\alpha \left( \dfrac{\beta^2}{\alpha^2} +\dfrac{\beta}{\alpha}+2\right) ,$
		\item [(v)] $F(\alpha, \beta)=\alpha \left( \dfrac{\beta^2}{\alpha^2} +\dfrac{2 \beta}{\alpha}+2\right).$
	\end{enumerate}	
	Then, in either case, the perimeter length of any geodesic triangle on $M$ does not depend on the orientation, that is,
\begin{equation}{\label{gs4.13}}
d_F(x,y)+d_F(y,z)+d_F(z,x)=d_F(x,z)+d_F(z,y)+d_F(y,x) \ \ \forall\ x,y,z, \ \in \ M.	
\end{equation}
\end{theorem}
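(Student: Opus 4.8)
The plan is to deduce the statement directly from Theorem \ref{gsthm4.1} together with the weightability identity recorded in Lemma \ref{gslem4.1}. By Theorem \ref{gsthm4.1}, for each of the five metrics $F$ listed and for an exact $1$-form $\beta$, the induced distance $d_F$ is a weighted quasi-metric on $M$ with a generalized weight $\omega : M \longrightarrow \mathbb{R}$. The first step is to rewrite the weightability axiom in symmetrized form: from $d_F(x,y)+\omega(x)=d_F(y,x)+\omega(y)$ we get $d_F(x,y)-d_F(y,x)=\omega(y)-\omega(x)$, and adding $d_F(x,y)+d_F(y,x)$ to both sides and dividing by $2$ yields
\[
d_F(x,y)=\rho(x,y)+\tfrac12\bigl[\omega(y)-\omega(x)\bigr],
\]
which is exactly equation (\ref{gs4.11}). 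Note this identity holds verbatim even when $\omega$ takes negative values, so the fact that Theorem \ref{gsthm4.1} only supplies a generalized weight causes no difficulty (equivalently, one invokes Lemma \ref{gslem4.1} directly).

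The second step is a telescoping computation. Applying the displayed identity to the three ordered pairs $(x,y)$, $(y,z)$, $(z,x)$ and summing,
\[
d_F(x,y)+d_F(y,z)+d_F(z,x)=\rho(x,y)+\rho(y,z)+\rho(z,x)+\tfrac12\bigl[(\omega(y)-\omega(x))+(\omega(z)-\omega(y))+(\omega(x)-\omega(z))\bigr],
\]
and the bracketed sum of weight differences vanishes identically. Hence the perimeter in one orientation equals $\rho(x,y)+\rho(y,z)+\rho(z,x)$. Repeating the computation for the reversed triangle gives $d_F(x,z)+d_F(z,y)+d_F(y,x)=\rho(x,z)+\rho(z,y)+\rho(y,x)$, with the weight terms cancelling in the same way.

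The final step is to observe that $\rho$ is symmetric by its definition as the symmetrization of $d_F$, so $\rho(x,z)=\rho(z,x)$, $\rho(z,y)=\rho(y,z)$, $\rho(y,x)=\rho(x,y)$; the two perimeters therefore coincide, which is (\ref{gs4.13}). I do not anticipate any genuine obstacle here: the mechanism is just the cancellation of an exact-form contribution around a closed loop, and everything substantive has already been done in Theorem \ref{gsthm4.1}. If one prefers a self-contained route bypassing Lemma \ref{gslem4.1}, one can instead write $d_F(p,q)=d_{F_0}(p,q)+c\int_{\gamma_{pq}}\beta=d_{F_0}(p,q)+c\,(f(q)-f(p))$, where $F=F_0+c\beta$ with $F_0$ absolute homogeneous and $\beta=df$; since $d_{F_0}$ is symmetric and the potential differences $f(q)-f(p)$ telescope around the triangle, (\ref{gs4.13}) follows. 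The only mild point in that alternative is that the $F$-geodesic joining two points is also $F_0$-minimizing, which holds because $F$ and $F_0$ differ only by the closed term $c\beta$ and hence share geodesics as point sets, exactly as used in the proof of Theorem \ref{gsthm4.1}.
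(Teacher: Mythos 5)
Your proposal is correct and follows essentially the same route as the paper: invoke Theorem \ref{gsthm4.1} to get weightability, apply the identity (\ref{gs4.11}) from Lemma \ref{gslem4.1}, and let the weight differences telescope around the triangle while the symmetrized distance $\rho$ contributes equally to both orientations. The paper leaves this "elementary computation" implicit, whereas you have written it out in full (and your alternative via the exact-form potential is the same cancellation seen from the integral side).
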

\begin{proof}
\begin{enumerate}
	\item[(i)] 	
	We have 
		\begin{align*}
		F(\alpha, \beta)
		&=\dfrac{(\alpha+\beta)^2}{\alpha}\\
		&=F_0(\alpha, \beta)+ 2 \beta,
		\end{align*}
where  $F_0(\alpha, \beta)=\alpha+\dfrac{\beta^2}{\alpha}$ is absolute homogeneous and $d \beta=0.$
\begin{figure}
	\caption{The perimeter length of geodesic triangle is independent of orientation.}
	\centering
	\includegraphics[width=10cm,height=5cm]{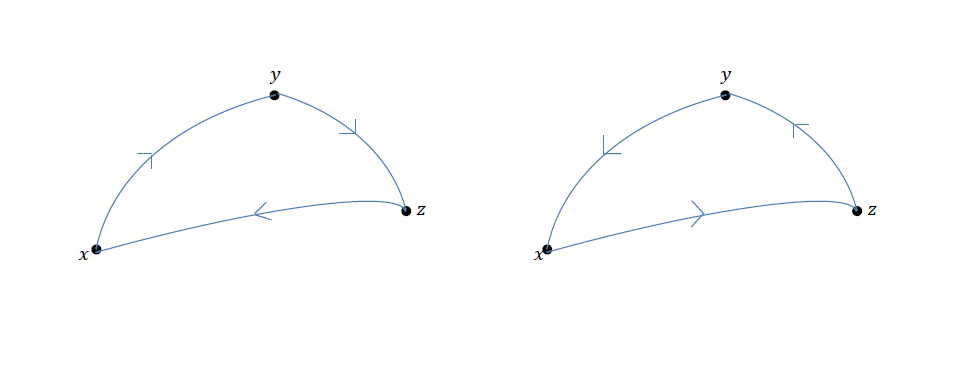} 
\end{figure}
From theorem (\ref{gsthm4.1}), it follows that the quasi-metric is weightable and therefore equation (\ref{gs4.11}) holds good. By using this formula, an elementary computation proves equation (\ref{gs4.13}).\\
Similarly, using theorem (\ref{gsthm4.1}), in all remaining cases, it can be easily verified that  quasi-metrics are weightable and the equation (\ref{gs4.11}) holds good. By using the formula (\ref{gs4.11}), in all remaining cases, an elementary computation gives equation (\ref{gs4.13})  which shows that in either case, the perimeter length of any geodesic triangle on $M$ does not depend on the orientation. 
\end{enumerate}	
\end{proof}

\section{Embeddable quasi-metric spaces with generalized weight}
\begin{definition}
Let $(M,q, \omega)$ and $(M',q', \omega') $ be two weighted quasi metric spaces. A mapping $ \psi :M \longrightarrow M'$ is called  a morphism if $\forall x,y \in M, $ the following properties are satisfied:
\begin{equation}
q(x,y)\geq q'(\psi(x), \psi(y)),
\end{equation}
\begin{equation}
\omega(x) \geq \omega'\left( \psi(x)\right).
\end{equation}
In addition to above two conditions, if $ q(x,y)\leq q'(\psi(x), \psi(y))\ $ holds $ \forall x,y \in M, $ then $\psi$ is called isometric morphism.
\end{definition}
\begin{definition}
Let $(M,q, \omega)$ and $(M',q', \omega') $ be two weighted quasi metric spaces, then the bijection $ \psi :M \longrightarrow M'$ is called  an isomorphism if $\forall x,y \in M, $ the following properties are satisfied:
\begin{equation}
q(x,y) = q'(\psi(x), \psi(y)),
\end{equation}
\begin{equation}
\omega(x) = \omega'\left( \psi(x)\right).
\end{equation}
\end{definition}
\begin{definition}
An embedding of  $(M,q, \omega)$ into $ (N,Q,W)$ is an isomorphism of $(M,q, \omega)$ onto a subspace  $(M',q', \omega') $  of $ (N,Q,W),$ where $M' \subset N, $ and $q', \omega'$ are restrictions of $Q, W $ to $ M' \times  M' $ and $ M'$ respectively.
\end{definition}
\begin{prop}{\label{prop5.1}}
Let $ (S,d)$ be a metric space. Then $ (N,Q,W) $ is a generalized wighted quasi-metric space which we call generalized bundle over $ (S,d),$ where $ N=S\times \mathbb{R},$
$Q: N \times N \longrightarrow \mathbb{R} $ is a generalized quasi-metric defined as
$$ Q\left( (x, \xi),(y, \eta)\right)=d(x,y)+\lambda \left( \eta-\xi\right) ,\ x,y \in S,\ \xi, \eta \in \mathbb{R},  $$
and $ W: N \longrightarrow \mathbb{R} $ is a generalized weight function defined as 
$$ W(x, \xi)= 2 \lambda \xi, \ x \in S,\ \xi \in \mathbb{R}, \ \lambda \geq1 \ \  \text{is an integer.}$$
\end{prop}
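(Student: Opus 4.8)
The plan is to verify, directly from the definitions, the three axioms making $Q$ a generalized quasi-metric on $N=S\times\mathbb{R}$ together with the weightability axiom relating $Q$ and $W$, using only the metric axioms for $d$ and the positivity of $\lambda$ (the integrality of $\lambda$ is not needed here; it will presumably matter only in the embedding results that follow). First I would record the vanishing on the diagonal: $Q((x,\xi),(x,\xi))=d(x,x)+\lambda(\xi-\xi)=0$. Since we are in the \emph{generalized} setting, no further positiveness is demanded — and indeed $Q$ genuinely takes negative values whenever $\lambda(\xi-\eta)>d(x,y)$, which is exactly why the generalized framework is the appropriate one and why one should not attempt to prove non-negativity of $Q$ or $W$.

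Next I would check the triangle inequality. For $(x,\xi),(y,\eta),(z,\zeta)\in N$ the two ``middle'' linear contributions cancel:
\[ Q((x,\xi),(y,\eta))+Q((y,\eta),(z,\zeta))=d(x,y)+d(y,z)+\lambda(\zeta-\xi), \]
whereas $Q((x,\xi),(z,\zeta))=d(x,z)+\lambda(\zeta-\xi)$. Hence the required inequality reduces precisely to $d(x,z)\le d(x,y)+d(y,z)$, which holds because $d$ is a metric on $S$.

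For the separation axiom, suppose $Q((x,\xi),(y,\eta))=Q((y,\eta),(x,\xi))=0$. Adding the two equations cancels the $\lambda$-terms and gives $2d(x,y)=0$, so $d(x,y)=0$ and therefore $x=y$ since $d$ is a metric. Substituting back yields $\lambda(\eta-\xi)=0$, and as $\lambda\ge 1>0$ this forces $\eta=\xi$, so $(x,\xi)=(y,\eta)$; this is the one place where positivity of $\lambda$ is essential.

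Finally, for weightability I would compute both sides of axiom (d):
\[ Q((x,\xi),(y,\eta))+W(x,\xi)=d(x,y)+\lambda(\eta-\xi)+2\lambda\xi=d(x,y)+\lambda(\xi+\eta), \]
and, using symmetry of $d$, the companion expression $Q((y,\eta),(x,\xi))+W(y,\eta)=d(x,y)+\lambda(\xi-\eta)+2\lambda\eta=d(x,y)+\lambda(\xi+\eta)$ equals it. Thus $W$ is a (generalized) weight for $Q$, and $(N,Q,W)$ is a generalized weighted quasi-metric space. I do not anticipate any real obstacle: the whole argument is a short computation, the only points deserving care being the recognition that $Q,W$ are authentically generalized and the observation that strict positivity of $\lambda$ is exactly what the separation step consumes.
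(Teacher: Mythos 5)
Your proposal is correct and follows essentially the same route as the paper's proof: verify vanishing on the diagonal, the triangle inequality via cancellation of the middle $\lambda$-terms, the separation axiom (you add the two equations first to get $d(x,y)=0$, the paper effectively subtracts first to get $\eta=\xi$ — trivially equivalent), and the weightability identity by direct computation. Your added remarks on where positivity of $\lambda$ is used and why non-negativity of $Q$ should not be expected are accurate but do not change the substance of the argument.
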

\begin{proof}
$\forall x,y,z \in S  $ and $ \xi, \eta, \nu \in \mathbb{R}, $ we have \\
\begin{enumerate}
	\item [(i)] $ Q\left( (x, \xi), (x, \xi) \right) =d(x,x)+\lambda \left( \xi-\xi\right)=0.$
	\item [(ii)]
	\begin{align*}
	Q\left( (x, \xi), (z, \nu) \right) 
	&=d(x,z)+\lambda \left( \nu-\xi\right)\\
	&\leq d(x,y)+\lambda \left( \eta-\xi\right)+ d(y,z)+\lambda \left( \nu-\eta \right)\\
	&=Q\left( (x, \xi),(y, \eta)\right) +Q\left((y, \eta), (z, \nu)\right).
	\end{align*}
	\item [(iii)] Let $ Q\left( (x, \xi),(y, \eta)\right)=Q\left((y, \eta), (x, \xi)\right)=0,$\\
	i.e., $ d(x,y)+\lambda \left( \eta-\xi\right)=d(y,x)+\lambda \left( \xi-\eta\right)=0. $\\
	Since $d(x,y)=d(y,x),$ we have 
	$ \eta=\xi $ \\
	and then $ d(x,y)+\lambda \left( \eta-\xi\right)=0$ gives us $ d(x,y)=0, $\\
	which implies that $x=y.$
	\item [(iv)] 
	\begin{align*}
	Q\left( (x, \xi),(y, \eta)\right)+W(x, \xi)
	&=d(x,y)+\lambda \left( \eta-\xi\right) +2 \lambda \xi\\
	&=d(x,y)+\lambda \left( \eta+\xi\right)\\
	&=d(y,x)+\lambda \left( \xi-\eta\right) +2 \lambda \eta\\
	&=Q\left((y, \eta), (x, \xi)\right)+W(y, \eta).
	\end{align*}
\end{enumerate}
Therefore, $ (N,Q,W) $ is generalized wighted quasi-metric space.
\end{proof}

\begin{theorem}{\label{thm5.2}}
Every weighted quasi-metric space with generalized weight can be embedded in a generalized bundle over a suitable metric space $  (S,d)$.
\end{theorem}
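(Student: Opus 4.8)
The plan is to realise the given space inside a generalized bundle of the kind produced by Proposition \ref{prop5.1}, taking as base metric space $M$ itself equipped with the symmetrization of its quasi-metric, and recording in the $\mathbb{R}$-coordinate of each point (half of) its weight. So, starting from a generalized weighted quasi-metric space $(M,q,\omega)$, I would first set $\rho(x,y)=\tfrac12\bigl(q(x,y)+q(y,x)\bigr)$, the symmetrized distance of $q$. A short verification shows that $(S,d):=(M,\rho)$ is a metric space: symmetry is immediate, the triangle inequality follows by averaging $q(x,z)\le q(x,y)+q(y,z)$ with $q(z,x)\le q(z,y)+q(y,x)$, and $\rho(x,y)>0$ for $x\neq y$ comes from the positiveness axiom for $q$. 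Next I would invoke Lemma \ref{gslem4.1} — whose proof is purely algebraic and transfers verbatim to generalized weights — to record, since $q$ is weightable with weight $\omega$, the identity
\[
 q(x,y)=\rho(x,y)+\tfrac12\bigl[\omega(y)-\omega(x)\bigr]\qquad\forall\,x,y\in M .
\]

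Then I would form the generalized bundle $(N,Q,W)$ over $(S,d)$ supplied by Proposition \ref{prop5.1} with parameter $\lambda=1$, so that $N=M\times\mathbb{R}$, $Q\bigl((x,\xi),(y,\eta)\bigr)=\rho(x,y)+(\eta-\xi)$ and $W(x,\xi)=2\xi$, and I would define $\psi:M\longrightarrow N$ by $\psi(x)=\bigl(x,\tfrac12\omega(x)\bigr)$. It is then a direct check that $W(\psi(x))=2\cdot\tfrac12\omega(x)=\omega(x)$ and, using the displayed identity, that
\[
 Q(\psi(x),\psi(y))=\rho(x,y)+\tfrac12\bigl[\omega(y)-\omega(x)\bigr]=q(x,y)\qquad\forall\,x,y\in M .
\]
Since the first coordinate of $\psi$ is the identity on $M$, the map $\psi$ is injective, hence a bijection of $M$ onto $M':=\psi(M)\subset N$; endowing $M'$ with the restrictions $q':=Q|_{M'\times M'}$ and $\omega':=W|_{M'}$, the map $\psi$ becomes an isomorphism of $(M,q,\omega)$ onto the subspace $(M',q',\omega')$, i.e. an embedding in the sense defined above.

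I do not expect a genuine obstacle here; the construction is forced once one notices that the correct $\mathbb{R}$-coordinate is $\tfrac12\omega$. The only two points needing attention are: (a) confirming that the symmetrization $\rho$ is indeed a metric — and, in the fully generalized setting where $q$ itself may take negative values, reading $(S,d)$ in the correspondingly wider sense; and (b) the fact that Proposition \ref{prop5.1} is phrased for an integer $\lambda\ge1$, which is precisely why one takes $\lambda=1$, so that the weight coordinate reproduces $\omega$ exactly rather than up to the factor $\lambda$. Everything else reduces to the routine verification that $\psi$ preserves $q$ and $\omega$ on the nose, which the two displayed equalities already carry out.
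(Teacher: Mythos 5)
Your proposal is correct and follows essentially the same route as the paper: symmetrize $q$ to get the base metric space $(S,d)=(M,\rho)$, apply Proposition \ref{prop5.1} to build the bundle, and embed via $x\mapsto\bigl(x,\tfrac{1}{2\lambda}\omega(x)\bigr)$ using the identity from Lemma \ref{gslem4.1}. The only cosmetic difference is that you fix $\lambda=1$, whereas the paper keeps $\lambda$ general and absorbs it into the second coordinate $\tfrac{1}{2\lambda}\omega(x)$; both yield the same verification.
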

\begin{proof}
Let $ (M, q, \omega)$ be a generalized  weighted quasi-metric space.\\
Let $S=M, d(x,y) =  \rho(x,y) :=\dfrac{q(x,y)+q(y,x)}{2} $ be symmetrization of $q.$\\
Obviously,$ (S,d)$ is a metric space, and then by proposition \ref{prop5.1}, we have a generalized weighted quasi-metric space $ (N, Q, W),$ where 
$ N=S\times \mathbb{R},$\\
$Q: N \times N \longrightarrow \mathbb{R} $ is a generalized quasi-metric defined as
$$ Q\left( (x, \xi),(y, \eta)\right)=d(x,y)+\lambda \left( \eta-\xi\right) ,\ x,y \in S,\ \xi, \eta \in \mathbb{R},  $$
and $ W: N \longrightarrow \mathbb{R} $ is a generalized weight function defined as 
$$ W(x, \xi)= 2 \lambda \xi, \ x \in S,\ \xi \in \mathbb{R}, \ \lambda \geq 1 \ \  \text{is an integer.}$$ 
Define a function $ \psi: M \longrightarrow N $ as
$$ \psi(x)= \left(x, \dfrac{1}{2 \lambda}\ \omega(x) \right),\ x \in M=S.  $$
We claim that $ \psi$ is an embedding, i.e., an isomorphism of $ (M, q, \omega)$ onto a subspace of $ (N, Q, W).$\\
$ \psi $ is one-one.\\
For, let $ x, y \in M $ such that $ \psi(x)=\psi(y) $ \\ 
$ \implies \ \left(x, \dfrac{1}{2 \lambda}\ \omega(x) \right)=\left(y, \dfrac{1}{2 \lambda}\ \omega(y) \right) $\\
$ \implies\ x=y. $\\
Next, $ \psi $ preserves quasi-metric.\\
For, $ \forall\ x, y \in M, $
we have 
\begin{align*}
Q\left( \psi(x), \psi(y)\right)
&=Q\left(\left( x, \dfrac{1}{2 \lambda}\ \omega(x)\right) , \left( y, \dfrac{1}{2 \lambda}\ \omega(y)\right) \right)  \\
&=d(x,y) + \lambda\ \left( \dfrac{\omega(y)- \omega(x)}{2 \lambda} \right) \\
&=\rho(x,y) + \dfrac{\omega(y)- \omega(x)}{2} \\
&= q(x,y). \ \ \ \ \ \ \ \ \ \ \  \left(  \because\ \text{of lemma \ref{gslem4.1}}\right) 
\end{align*}
 Also $ \psi $ preserves weight.\\
 For,  $ \forall\ x \in M, $
 we have 
 \begin{align*}
 W\left( \psi(x)\right)
 &=W\left( x, \dfrac{1}{2 \lambda}\ \omega(x)\right)\\
&= 2 \lambda \left( \dfrac{1}{2 \lambda}\ \omega(x) \right) \\
&=\omega(x).
 \end{align*}
This completes the proof.
\end{proof}
\begin{theorem}{\label{gsthm5.3}}
	Let $(M,F(\alpha, \beta))\ \left(  \beta\  \text{a closed}\  1-\text{form on}\ M\right) $ be a smooth, simply connected Finsler $n-$manifold, where $F$ is either of the following:
	\begin{enumerate}
		\item[(i)]  $F(\alpha, \beta)=\dfrac{(\alpha+\beta)^2}{\alpha},$
		\item[(ii)]  $F(\alpha, \beta)=\alpha \cos \left(\dfrac{\beta}{\alpha}\right)+a\beta,$
		\item[(iii)]  $F(\alpha, \beta)= \alpha \left( \dfrac{\beta}{\alpha} +2\right) ^2,$
		\item[(iv)]  $F(\alpha, \beta)=\alpha \left( \dfrac{\beta^2}{\alpha^2} +\dfrac{\beta}{\alpha}+2\right) ,$
		\item [(v)] $F(\alpha, \beta)=\alpha \left( \dfrac{\beta^2}{\alpha^2} +\dfrac{2 \beta}{\alpha}+2\right).$
	\end{enumerate}	
If $q$ is a weighted quasi-metric on $M$ with generalized weight $ \omega,$
  then the 
space $ (M, q, \omega) $ is embeddable in a generalized bundle over a suitable metric space.
\end{theorem}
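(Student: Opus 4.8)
The plan is to deduce the statement directly from Theorem~\ref{thm5.2}. First note that the Finsler hypotheses serve only to guarantee that the situation is non-vacuous: since $M$ is simply connected and $\beta$ is closed, $\beta$ is exact, so by Theorem~\ref{gsthm4.1} each of the five metrics $F$ listed does induce a weighted quasi-metric $d_F$ on $M$ carrying a generalized weight. Granting now an arbitrary weighted quasi-metric $q$ on $M$ with generalized weight $\omega$, the triple $(M,q,\omega)$ is a generalized weighted quasi-metric space in the sense of Section~5, and Theorem~\ref{thm5.2} asserts precisely that such a space embeds in a generalized bundle over a suitable metric space. Thus the proof is essentially the single line: apply Theorem~\ref{thm5.2} to $(M,q,\omega)$.

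To spell this out in the present notation, I would reproduce the construction in the proof of Theorem~\ref{thm5.2}. Put $S=M$ and let $d(x,y):=\rho(x,y)=\tfrac12\bigl(q(x,y)+q(y,x)\bigr)$ be the symmetrization of $q$; this is a genuine metric, symmetry being obvious, the triangle inequality following by averaging the two triangle inequalities for $q$, and $d(x,y)=0$ forcing $q(x,y)=q(y,x)=0$, hence $x=y$ by the separation axiom. By Proposition~\ref{prop5.1}, fixing an integer $\lambda\geq 1$ and setting $N=S\times\mathbb{R}$, $Q\bigl((x,\xi),(y,\eta)\bigr)=d(x,y)+\lambda(\eta-\xi)$, $W(x,\xi)=2\lambda\xi$ yields a generalized weighted quasi-metric space $(N,Q,W)$. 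Define $\psi:M\to N$ by $\psi(x)=\bigl(x,\tfrac{1}{2\lambda}\omega(x)\bigr)$.

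Then $\psi$ is the desired embedding, exactly as in Theorem~\ref{thm5.2}: it is injective since the first coordinate of $\psi(x)$ recovers $x$; it preserves weight because $W(\psi(x))=2\lambda\cdot\tfrac{1}{2\lambda}\omega(x)=\omega(x)$; and it preserves the quasi-metric because, using the representation $q(x,y)=\rho(x,y)+\tfrac12\bigl(\omega(y)-\omega(x)\bigr)$ supplied by Lemma~\ref{gslem4.1},
\[
Q\bigl(\psi(x),\psi(y)\bigr)=d(x,y)+\lambda\cdot\frac{\omega(y)-\omega(x)}{2\lambda}=\rho(x,y)+\frac{\omega(y)-\omega(x)}{2}=q(x,y).
\]
Hence $\psi$ is an isomorphism of $(M,q,\omega)$ onto the subspace $\psi(M)$ of $(N,Q,W)$, i.e.\ an embedding. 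I do not expect any real obstacle here; the only delicate point is that Lemma~\ref{gslem4.1} must be used with $\omega$ allowed to take negative values, but its identity is a purely algebraic consequence of the weightability relation $q(x,y)+\omega(x)=q(y,x)+\omega(y)$ and survives unchanged, while Proposition~\ref{prop5.1} and Theorem~\ref{thm5.2} were stated for generalized weighted quasi-metrics from the outset.
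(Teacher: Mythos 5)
Your proposal is correct and coincides with the paper's own argument, which is simply the one-line observation that the statement follows directly from Theorem~\ref{thm5.2}; your expanded version merely reproduces the construction from the proof of that theorem. The additional remark that the Finsler hypotheses only guarantee non-vacuity (via exactness of $\beta$ on a simply connected manifold and Theorem~\ref{gsthm4.1}) is a sensible clarification but not needed for the deduction.
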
	
\begin{proof}
Proof directly follows from theorem \ref{thm5.2}.
\end{proof}

\begin{prop}{\label{prop5.4}}
Let $  (S,d )$ be a metric space, and $f$ be a real valued function defined on $S.$ Let $ G_f=\biggl\{ \left( x,f(x) \right) : x\in S  \biggr\}$ be the graph of $f.$ Then $  ( G_f, Q, W)$ is a generalized weighted quasi-metric space,\\
where $ Q: G_f \times G_f \longrightarrow \mathbb{R} $ is a generalized quasi-metric defined as 
$$ Q\left( ( x,f(x)), (y, f(y)) \right)= d(x,y) + \lambda\left(f(y)-f(x) \right), \ x, y \in S   $$  
and $ W : G_f \longrightarrow \mathbb{R} $ is a generalized weight defined as 
$$ W\left(  x,f(x) \right)= 2 \lambda f(x), \ x \in S,\ \lambda \geq 1 \ \text{is an integer.}  $$ 
\end{prop}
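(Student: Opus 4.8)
The plan is to obtain $(G_f,Q,W)$ for free from Proposition \ref{prop5.1} by recognizing it as a subspace of the generalized bundle over $(S,d)$. First I would form, exactly as in Proposition \ref{prop5.1}, the generalized bundle $(N,Q_N,W_N)$ with $N=S\times\mathbb{R}$, $Q_N((x,\xi),(y,\eta))=d(x,y)+\lambda(\eta-\xi)$ and $W_N(x,\xi)=2\lambda\xi$, which is a generalized weighted quasi-metric space by that proposition. Now $G_f=\{(x,f(x)):x\in S\}$ is a subset of $N$, and the maps in the statement are nothing but $Q=Q_N|_{G_f\times G_f}$ and $W=W_N|_{G_f}$, obtained by substituting $\xi=f(x)$, $\eta=f(y)$. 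Since all four defining conditions of a generalized weighted quasi-metric space (vanishing on the diagonal, the triangle inequality, the separation axiom, and weightability) are quantified pointwise over the elements of the space, they are automatically inherited by $G_f$ with the restricted data; hence $(G_f,Q,W)$ is a generalized weighted quasi-metric space.

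If a self-contained verification is preferred, I would simply repeat the four checks in the proof of Proposition \ref{prop5.1} with $\xi,\eta,\nu$ replaced by $f(x),f(y),f(z)$: (i) $Q((x,f(x)),(x,f(x)))=d(x,x)+\lambda(f(x)-f(x))=0$; (ii) $Q((x,f(x)),(z,f(z)))=d(x,z)+\lambda(f(z)-f(x))\le d(x,y)+d(y,z)+\lambda(f(y)-f(x))+\lambda(f(z)-f(y))=Q((x,f(x)),(y,f(y)))+Q((y,f(y)),(z,f(z)))$, using the triangle inequality for $d$ and the telescoping of the $\lambda$-terms; (iii) if $Q((x,f(x)),(y,f(y)))=Q((y,f(y)),(x,f(x)))=0$, then adding the two equalities and using $d(x,y)=d(y,x)$ gives $2d(x,y)=0$, so $x=y$ and hence $(x,f(x))=(y,f(y))$; (iv) $Q((x,f(x)),(y,f(y)))+W(x,f(x))=d(x,y)+\lambda(f(y)-f(x))+2\lambda f(x)=d(x,y)+\lambda(f(x)+f(y))=d(y,x)+\lambda(f(x)-f(y))+2\lambda f(y)=Q((y,f(y)),(x,f(x)))+W(y,f(y))$.

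There is essentially no obstacle here: the only nontrivial input is that $d$ is a genuine metric, which is used just once, in the separation axiom, and even there the fact that a point of $G_f$ is pinned down by its first coordinate makes the conclusion immediate. I would also emphasize that no Lipschitz hypothesis on $f$ is needed for this proposition — "generalized" allows $Q$ and $W$ to take negative values — and that the $1$-Lipschitz condition will reappear only when one wants the companion statement identifying an arbitrary generalized weighted quasi-metric space with the graph of such an $f$, i.e. the converse direction of Theorem \ref{thm5.2}.
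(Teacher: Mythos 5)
Your proposal is correct and its self-contained verification coincides with the paper's own proof, which simply runs the same four checks (vanishing on the diagonal, triangle inequality via telescoping of the $\lambda$-terms, separation using $d(x,y)=d(y,x)$, and weightability). Your opening observation that the whole statement is just the restriction of Proposition \ref{prop5.1} to the subset $G_f\subset S\times\mathbb{R}$ is a tidy shortcut the paper does not make explicit, and your remark that no Lipschitz hypothesis is needed at this stage is also accurate.
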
	
\begin{proof}
$  \forall\ x,y,z \in S,$ we have
\begin{enumerate}
	\item[(i)] $  Q\left( ( x,f(x)), (x, f(x)) \right)= d(x,x) + \lambda\left(f(x)-f(x) \right)=0. $
	\item [(ii)] 
		\begin{align*}
	Q\left( (x, f(x)), (z,f(z)) \right) 
	&=d(x,z)+\lambda \left( f(z)-f(x)\right)\\
	&\leq d(x,y)+\lambda \left( f(y)-f(x)\right)+ d(y,z)+\lambda \left( f(z)-f(y)\right)\\
	&=Q\left( (x,f(x)),(y, f(y))\right) +Q\left((y,f(y)), (z,f(z))\right).
	\end{align*}
	\item [(iii)] Let $Q\left( (x,f(x)),(y, f(y))\right) = Q\left((y,f(y)), (x,f(x))\right)=0.$
This implies\\	$  d(x,y)+\lambda \left( f(y)-f(x)\right)= d(y,x)+\lambda \left( f(x)-f(y)\right)=0. $\\
	Since $ d(x,y)=d(y,x), $ we have $2\lambda \left( f(y)-f(x)\right)=0,$ i.e., $f(y)=f(x)$\\
	and then $d(x,y)+\lambda \left( f(y)-f(x)\right)=0$ gives us $d(x,y)=0$\\
	which implies $ x=y. $
	\item [(iv)] 
	\begin{align*}
	Q\left((x,f(x)),(y, f(y))\right)+W(x,f(x))
	&=d(x,y)+\lambda \left( f(y)-f(x)\right) + 2 \lambda f(x)\\
	&=d(x,y)+\lambda \left( f(y)+f(x)\right)\\
	&=d(y,x)+\lambda \left(f(x)- f(y)\right) + 2 \lambda f(y)\\
	&=Q\left((y, f(y)), (x,f(x))\right)+W(y,f(y)).
	\end{align*}
\end{enumerate}
Therefore, $  ( G_f, Q, W)$ is a generalized weighted quasi-metric space.
\end{proof}	

\begin{definition}
Let $  (S,d )$ be a metric space. Any function $ f: S \longrightarrow \mathbb{R} $ is said to be a $1-$Lipschitz function if it  satisfies
$$ \lvert f(x)-f(y) \rvert \leq d(x,y)\ \forall\ x, y \in S. $$
\end{definition}	

\begin{theorem}{\label{gsthm5.5}}
Let $ (S,d) $ be a metric space and $ f: S \longrightarrow \mathbb{R} $ be a $1-$Lipschitz function. Then $G_f,$ the graph of $f$ is weighted quasi-metric space with generalized weight.
Further, every weighted quasi-metric space with generalized weight can be constructed in similar manner.
\end{theorem}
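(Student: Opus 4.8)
The plan is to treat the two halves of the statement separately, reducing each to results already in hand: Proposition \ref{prop5.4} for the construction of the bundle over a graph, Theorem \ref{thm5.2} for the embedding procedure, and Lemma \ref{gslem4.1} for the dictionary between weightability and the symmetrized distance. No new computation should be required beyond what those three results provide.

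\emph{First half.} Given $(S,d)$ and a $1$-Lipschitz function $f\colon S\to\mathbb R$, I would take $\lambda=1$ in Proposition \ref{prop5.4}, which already furnishes the map $Q\big((x,f(x)),(y,f(y))\big)=d(x,y)+f(y)-f(x)$ and the weight $W(x,f(x))=2f(x)$ satisfying axioms (i)--(iv) of a generalized weighted quasi-metric space. The role of the Lipschitz hypothesis is solely to make $Q$ genuinely non-negative: for all $x,y\in S$,
\[
Q\big((x,f(x)),(y,f(y))\big)=d(x,y)+f(y)-f(x)\ge d(x,y)-|f(x)-f(y)|\ge 0,
\]
so $(G_f,Q,W)$ is an honest quasi-metric space, weightable, whose weight $W$ is only generalized (it may be negative where $f<0$). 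Hence $G_f$ is a weighted quasi-metric space with generalized weight.

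\emph{Second half.} For the converse, let $(M,q,\omega)$ be an arbitrary weighted quasi-metric space with generalized weight. Following the proof of Theorem \ref{thm5.2}, set $S=M$ and let $d=\rho$ be the symmetrization of $q$, which is a genuine metric on $M$. Define $f\colon S\to\mathbb R$ by $f(x)=\tfrac12\,\omega(x)$. The algebraic identity behind Lemma \ref{gslem4.1} gives $q(x,y)=\rho(x,y)+\tfrac12\big(\omega(y)-\omega(x)\big)$ (this step uses only weightability of $q$ together with non-negativity of $q$, not positivity of $\omega$), and the accompanying inequality (\ref{gs4.12}) reads $\tfrac12|\omega(x)-\omega(y)|\le\rho(x,y)$, i.e. $|f(x)-f(y)|\le d(x,y)$; thus $f$ is $1$-Lipschitz on $(S,d)$. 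By the first half (equivalently, by Proposition \ref{prop5.4} with $\lambda=1$), $(G_f,Q,W)$ is a weighted quasi-metric space with generalized weight, and I would finish by verifying that $\psi(x)=(x,f(x))$ is an isomorphism of $(M,q,\omega)$ onto $(G_f,Q,W)$: injectivity and surjectivity onto $G_f$ are immediate, while
\[
Q(\psi(x),\psi(y))=d(x,y)+f(y)-f(x)=\rho(x,y)+\tfrac12\big(\omega(y)-\omega(x)\big)=q(x,y),\qquad W(\psi(x))=2f(x)=\omega(x),
\]
so $\psi$ preserves both quasi-metric and weight. This exhibits $(M,q,\omega)$ as an isomorphic copy of the graph of a $1$-Lipschitz function, as required.

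The only genuinely load-bearing point is the passage from weightability of $q$ to the Lipschitz bound on $f$, and that is supplied verbatim by inequality (\ref{gs4.12}) of Lemma \ref{gslem4.1}; the rest is bookkeeping already performed in Theorem \ref{thm5.2}. I expect the only thing to watch is the choice $\lambda=1$: for an integer $\lambda\ge2$ the bound $|f(x)-f(y)|\le d(x,y)$ would no longer guarantee $Q\ge0$, so the $1$-Lipschitz condition matches exactly the case $\lambda=1$ of the bundle construction.
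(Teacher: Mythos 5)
Your proposal is correct and follows essentially the same route as the paper: $\lambda=1$ in Proposition \ref{prop5.4} plus the $1$-Lipschitz bound for the first half, and for the converse the symmetrization $d=\rho$, the choice $f=\tfrac12\omega$, inequality (\ref{gs4.12}) of Lemma \ref{gslem4.1} to get the Lipschitz property, and the identification $\psi(x)=(x,f(x))$ exactly as in Theorem \ref{thm5.2}. Your parenthetical remark that Lemma \ref{gslem4.1} must be applied with a generalized (possibly negative) weight $\omega$, even though it is stated for $\omega\colon M\to[0,\infty)$, is a point the paper passes over silently, and it is worth the extra care you give it.
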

\begin{proof}
	For first part, take $\lambda=1 $ in proposition \ref{prop5.4}. Since $f$ is a $1-$Lipschitz function, we have $Q\left( ( x,f(x)), (y, f(y)) \right) \geq0.  $\\
	Therefore, $ (G_f,Q,W) $ is a weighted quasi-metric space with generalized weight $ W.$\\
For the second part, let $ (M,q, \omega)$ be a weighted quasi-metric space with generalized weight $ \omega,$\
$ S=M$ and $ d(x,y) = \rho(x,y) :=\dfrac{q(x,y)+q(y,x)}{2} $ be symmetrization of $q,$
then $  (S,d)$ is a metric space.\\
Define $ f: S \longrightarrow \mathbb{R} $ by 
$$ f(x)=\dfrac{1}{2}\ \omega(x)\ \forall \ x \in S,
$$
then$f$ is a $1-$Lipschitz function, as $\forall\ x, y \in S,$ we have
\begin{align*}
\lvert f(x)-f(y) \rvert 
&=\dfrac{1}{2} \lvert \omega(x)-\omega(y) \rvert,\ \\
&\leq \rho(x,y) \ \ \ \ \ \ \left( \text{by lemma \ref{gslem4.1}} \right)  \\
&=d(x,y).
\end{align*}
First part gaurantees that $ \left(G_f, Q, W \right)  $ is a weighted quasi metric space with generalized weight $W,$\\
where  $ G_f=\biggl\{ \left( x,f(x) \right) : x\in S  \biggr\}$ is the graph of $f,$ \\
$ Q: G_f \times G_f \longrightarrow \mathbb{R} $ is a quasi-metric defined as 
$$ Q\left( ( x,f(x)), (y, f(y)) \right)= d(x,y) + f(y)-f(x), \ x, y \in S   $$  
and $ W : G_f \longrightarrow \mathbb{R} $ is a generalized weight defined as 
$$ W\left(  x,f(x) \right)= 2  f(x), \ x \in S.
$$ 
Now, by theorem \ref{thm5.2}, there is an embedding $ \psi: M \longrightarrow N,$ given by 
$ \psi(x)=\left( x, \dfrac{1}{2}\ \omega(x) \right),\  x \in M,$ where $ N=S \times \mathbb{R} $ is a generalized bundle over $ (S,d) $ with a generalized quasi-metric $ Q' : N \times N \longrightarrow \mathbb{R}$ given by 
$$ Q'\left( (x, \xi),(y, \eta)\right)=d(x,y)+ \left( \eta-\xi\right) ,\ x,y \in S,\ \xi, \eta \in \mathbb{R},  $$
and a generalized weight $ W' : N \longrightarrow \mathbb{R} $ given by
$$ W' (x, \xi)= 2  \xi, \ x \in S,\ \xi \in \mathbb{R}.
$$ 
\text{Since}
\begin{align*}
 \psi(M)
&=\left\{ \psi(x)\ \  : x \in M=S \right\}\\
&= \left\{\left( x, f(x) \right)\ \  : x \in S \right\}\\
&= G_f
\end{align*}
and $Q, W$ are restrictions of $Q', W'$ to $ G_f \times G_f $ and $G_f$ respectively,\\
therefore, $ \psi $ is an embedding of $ (M, q, \omega) $ onto $ (G_f, Q, W), $ a subspace of $ (N, Q', W'),$\
i.e., we can identify $ (M, q, \omega) $ with $ (G_f, Q, W).$
\end{proof}

Theorem \ref{gsthm5.6} immediately follows from the theorem \ref{gsthm5.5}.
\begin{theorem}{\label{gsthm5.6}}
	Let $(M,F(\alpha, \beta))\ \left(  \beta\  \text{a closed}\  1-\text{form on}\ M\right) $ be a smooth, simply connected Finsler $n-$manifold, where $F$ is either of the following:
	\begin{enumerate}
		\item[(i)]  $F(\alpha, \beta)=\dfrac{(\alpha+\beta)^2}{\alpha},$
		\item[(ii)]  $F(\alpha, \beta)=\alpha \cos \left(\dfrac{\beta}{\alpha}\right)+a\beta,$
		\item[(iii)]  $F(\alpha, \beta)= \alpha \left( \dfrac{\beta}{\alpha} +2\right) ^2,$
		\item[(iv)]  $F(\alpha, \beta)=\alpha \left( \dfrac{\beta^2}{\alpha^2} +\dfrac{\beta}{\alpha}+2\right) ,$
		\item [(v)] $F(\alpha, \beta)=\alpha \left( \dfrac{\beta^2}{\alpha^2} +\dfrac{2 \beta}{\alpha}+2\right).$
	\end{enumerate}	
	If $q$ is a weighted quasi-metric on $M$ with generalized weight $ \omega,$
	then there is a metric space $ (S,d) $ and a  $1-$Lipschitz function $ f: S \longrightarrow \mathbb{R} $ such that the 
	space $ (M, q, \omega) $ can be viewed as the weighted quasi metric space $ (G_f, Q, W), $ with  generalized weight $W.$
\end{theorem}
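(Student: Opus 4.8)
The plan is to deduce this directly from Theorem \ref{gsthm5.5}: the hypotheses on $F$ (being one of the five listed metrics, with $\beta$ a closed $1$-form on the simply connected manifold $M$) serve only to guarantee, via Theorem \ref{gsthm4.1}, that weighted quasi-metrics $q=d_F$ with generalized weight $\omega$ genuinely exist on such an $M$; once such a pair $(q,\omega)$ is given, the construction that follows is insensitive to its origin.

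First I would invoke the second part of Theorem \ref{gsthm5.5}. Given the generalized weighted quasi-metric space $(M,q,\omega)$, set $S:=M$ and let $d:=\rho$ be the symmetrization $\rho(x,y)=\frac{1}{2}\bigl(q(x,y)+q(y,x)\bigr)$. As recorded in the excerpt, $\rho$ is symmetric, vanishes on the diagonal, and inherits the triangle inequality from $q$, so $(S,d)$ is a metric space.

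Next I would set $f:S\longrightarrow\mathbb{R}$, $f(x):=\frac{1}{2}\,\omega(x)$. By inequality (\ref{gs4.12}) of Lemma \ref{gslem4.1}, $\lvert f(x)-f(y)\rvert=\frac{1}{2}\lvert\omega(x)-\omega(y)\rvert\leq\rho(x,y)=d(x,y)$, so $f$ is $1$-Lipschitz. Applying Proposition \ref{prop5.4} with $\lambda=1$ (equivalently the first part of Theorem \ref{gsthm5.5}) to this $(S,d,f)$ shows that the graph $G_f=\bigl\{(x,f(x)):x\in S\bigr\}$, equipped with $Q\bigl((x,f(x)),(y,f(y))\bigr)=d(x,y)+f(y)-f(x)$ and $W(x,f(x))=2f(x)$, is a weighted quasi-metric space with generalized weight $W$.

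Finally I would identify $(M,q,\omega)$ with $(G_f,Q,W)$ through the map $\psi(x)=\bigl(x,\frac{1}{2}\omega(x)\bigr)$ of Theorem \ref{thm5.2}: $\psi$ is injective, and by the representation formula (\ref{gs4.11}) of Lemma \ref{gslem4.1} one computes $Q\bigl(\psi(x),\psi(y)\bigr)=\rho(x,y)+\frac{1}{2}\bigl(\omega(y)-\omega(x)\bigr)=q(x,y)$ and $W\bigl(\psi(x)\bigr)=\omega(x)$, so $\psi$ is an isomorphism of $(M,q,\omega)$ onto $(G_f,Q,W)$. There is no genuine obstacle here; the theorem is essentially a repackaging of Theorem \ref{gsthm5.5}, and the only point worth stressing in the write-up is that the listed metrics with $\beta$ closed are exactly the setting in which Theorem \ref{gsthm4.1} produces such a $(q,\omega)$, so the hypothesis that $q$ is a weighted quasi-metric with generalized weight is not vacuous.
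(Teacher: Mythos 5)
Your proposal is correct and matches the paper's approach exactly: the paper simply states that Theorem \ref{gsthm5.6} follows immediately from Theorem \ref{gsthm5.5}, and your write-up spells out that deduction (symmetrization $d=\rho$, the $1$-Lipschitz function $f=\frac{1}{2}\omega$ via Lemma \ref{gslem4.1}, and the identification $\psi(x)=(x,\frac{1}{2}\omega(x))$) in the same way the paper proves the second part of Theorem \ref{gsthm5.5}. Your observation that the hypotheses on $F$ serve only to ensure such a pair $(q,\omega)$ exists is also consistent with the paper's treatment.
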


\end{document}